\theoremstyle{plain}
\newtheorem{thm}{Theorem}
\newtheorem{proposition}{Proposition}
\newtheorem{corollary}{Corollary}
\newtheorem{lemma}{Lemma}
\newtheorem{assumption}{Assumption}
\theoremstyle{definition}
\theoremstyle{remark}
\DeclareMathOperator{\R}{{\mathbb R}}
\DeclareMathOperator{\tr}{tr}
\DeclareMathOperator{\argmin}{arg\,min}
\newcommand{\SGN}{C_{\psi_2}}
\newcommand{\CEV}{C_{ev}}
\newcommand{\EstPCR}{\hat f_d^{PCR}}
\newcommand{\EstTPCR}{\tilde f_d^{PCR}}
\newcommand{\EstORA}{\hat f_d^{oracle}}
\newcommand{\EstTORA}{\tilde f_d^{oracle}}
\renewcommand{\phi}{\varphi}
\renewcommand{\le}{\leqslant}
\renewcommand{\ge}{\geqslant}
\numberwithin{equation}{section}
\begin{document}
%\title{Error analysis for principal component regression using perturbation theory}
%\title{A perturbation approach to principal component regression}
\title{A note on the prediction error of principal component regression}
\author{Martin Wahl\thanks{Institut f\"{u}r Mathematik, Humboldt-Universit\"{a}t zu Berlin, Unter den Linden 6, 10099 Berlin, Germany. E-mail: martin.wahl@math.hu-berlin.de 
\newline
\textit{2010 Mathematics Subject Classifcation.} 62H25\newline
\textit{Key words and phrases.} Principal component regression, Prediction error, Principal component analysis, Reconstruction error, Excess risk.}}
\date{}

\maketitle

\begin{abstract}
We analyse the prediction error of principal component regression (PCR) and prove non-asymptotic upper bounds for the corresponding squared risk. Under mild assumptions, we show that PCR performs as well as the oracle method obtained by replacing empirical principal components by their population counterparts. Our approach relies on upper bounds for the excess risk of principal component analysis. %Allowing for general Hilbert spaces, we include functional and kernel PCR.
\end{abstract}

\section{Introduction}
\subsection{Model} Let $(\mathcal{H},\langle\cdot,\cdot\rangle)$ be a separable Hilbert space and let $(Y,X)$ be a pair of random variables satisfying the regression equation
\begin{equation}\label{EqModel}
Y=\langle f,X\rangle+\epsilon,
\end{equation}
where $f\in\mathcal{H}$, $X$ is a random variable with values in $\mathcal{H}$ such that $\mathbb{E}X=0$ and $\mathbb{E}\|X\|^2<\infty$, and $\epsilon$ is a real-valued random variable such that $\mathbb{E}(\epsilon|X)=0$ and $\mathbb{E}(\epsilon^2|X)=\sigma^2$. We suppose that we observe $n$ independent copies $(Y_1,X_1),\dots,(Y_n,X_n)$ of $(Y,X)$ and consider the problem of estimating $f$.

Allowing for general Hilbert spaces $\mathcal{H}$, the statistical model \eqref{EqModel} covers several regression problems. It includes a special case of the functional linear model, in which the responses are scalars and the covariates are curves. This model has been extensively studied in the literature, see e.g. the monographs by Horv\'{a}th and Kokoszka \cite{HK12} and Hsing and Eubank \cite{HE15}. It also includes several kernel learning problems. For instance, nonparametric regression with random design can be written in the form \eqref{EqModel}, provided that the regression function is contained in a reproducing kernel Hilbert space. This connection between statistical learning theory and the theory of linear inverse problems has been developed e.g. in De Vito et al.~\cite{VRC05}. 

In this note, we focus on estimating $f$ by principal component regression (PCR). PCR is a widely known estimation procedure in which principal component analysis (PCA) is applied in a first step to reduce the high dimensionality of the data. Then, in a second step, an estimator of $f$ is obtained by regressing the responses on the leading empirical principal components. Our main goal is to bound the prediction error of PCR using the reconstruction error of PCA. In particular, we show that the error due to regressing on empirical principal components instead of regressing on their population counterparts can be controlled by the excess risk of PCA. The situation is quite different in the case of error bounds in $\mathcal{H}$~norm, for which stronger perturbation bounds related to subspace distances are required.

%This 
%This implies, for instance, that PCR can perform as well as the oracle method also in the case of small or vanishing spectral gaps, clarifying the role of spectral gaps for the prediction error of PCR

A general account on PCR is given in the monograph by Jolliffe \cite{J02}. For a study of the prediction error of PCR, with focus on minimax optimal rates of convergence under standard model assumptions, see Hall and Horowitz \cite{HH07} and Brunel, Mas, and Roche \cite{BMR16} for the functional data context and Blanchard and M\"{u}cke \cite{BM18} for the statistical learning context. Note that the latter studies PCR (resp. spectral cut-off) within a larger class of spectral regularisation methods. Bounds for the reconstruction error using the machinery of empirical risk minimisation are derived in Blanchard, Bousquet, and Zwald~\cite{BBZ07}. The final result obtained in this paper rely on the perturbation approach to the excess risk developed in Rei\ss{} and Wahl \cite{RW17}.

\subsubsection*{Further notation} Let $\|\cdot\|$ denote the norm on $\mathcal{H}$.  Given a bounded (resp. Hilbert-Schmidt) operator $A$ on $\mathcal{H}$, we denote the operator norm (resp. the  Hilbert-Schmidt norm) of $A$ by $\|A\|_\infty$ (resp. $\|A\|_2$). Given a trace class operator $A$ on $\mathcal{H}$, we denote the trace of $A$ by $\operatorname{tr}(A)$. For $g\in\mathcal{H}$, the empirical and the $L^2(\mathbb{P}^X)$ norm are defined by $\|g\|_{n}^2=(1/n)\sum_{i=1}^n\langle g,X_i\rangle^2$ and $\|g\|_{L^2(\mathbb{P}^X)}^2=\mathbb{E}\langle g,X\rangle^2$, respectively. For $g,h\in\mathcal{H}$ we denote by $g\otimes h$ the rank-one operator defined by $(g\otimes h)x=\langle h,x\rangle g$, $x\in \mathcal{H}$. We write $\mathbf{Y}=(Y_1,\dots,Y_n)^T$ and $\boldsymbol{\epsilon}=(\epsilon_1,\dots,\epsilon_n)^T$. Throughout the paper, $c$ and $C$ denote constants that may change from line to line (by a numerical value).

\subsection{Principal component analysis}\label{SecPCA}
The covariance operator of $X$ is denoted by $\Sigma$. Since $X$ is assumed to be centered and strongly square-integrable, the covariance operator $\Sigma$ can be defined by $\Sigma=\mathbb{E} X\otimes X$, where expectation is taken in the Hilbert space of all Hilbert-Schmidt operators on $\mathcal{H}$. It is well-known that under the above assumptions, the covariance operator $\Sigma$ is a positive, self-adjoint trace class operator, see e.g. \cite[Theorem 7.2.5]{HE15}. By the spectral theorem, there exists a sequence $\lambda_1\ge \lambda_2\ge\dots>0$ of  positive eigenvalues (which is either finite or converges to zero) together with an orthonormal system of eigenvectors $u_1,u_2,\dots$ such that $\Sigma$ has the spectral decomposition
\begin{equation}\label{EqSpecDec}
\Sigma=\sum_{j\ge 1}\lambda_j P_j,\qquad P_j=u_j\otimes u_j.
\end{equation}
Without loss of generality we shall assume that the eigenvectors $u_1,u_2,\dots$ form an orthonormal basis of $\mathcal{H}$ such that $\sum_{j\ge 1}P_j=I$. Moreover, let
\begin{equation*}
\hat{\Sigma}=\frac{1}{n}\sum_{i=1}^nX_i\otimes X_i
\end{equation*}
be the sample covariance operator. Again, there exists a sequence $\hat{\lambda}_1\ge \hat{\lambda}_2\ge\dots\ge 0$ of eigenvalues together with an orthonormal basis of eigenvectors $\hat{u}_1,\hat{u}_2,\dots$ such that we can write
\begin{equation*}
\hat{\Sigma}=\sum_{j\ge 1}\hat{\lambda}_j \hat{P}_j,\qquad \hat{P}_j=\hat{u}_j\otimes\hat{u}_j.
\end{equation*}
For $d\ge 1$, we write
\[
P_{\le d}=\sum_{j\le d}P_j,\quad P_{>d}=\sum_{k>d}P_k, \qquad \hat{P}_{\le d}=\sum_{j\le d}\hat{P}_j,\quad \hat{P}_{> d}=\sum_{k> d}\hat{P}_k
\]
for the orthogonal projections onto the linear subspace spanned by the first $d$ eigenvectors of $\Sigma$ (resp. $\hat \Sigma$), and onto its orthogonal complement. Introducing ${\mathcal P}_{d}=\{P:{\mathcal H}\to{\mathcal H}\,|\, P\text{ is orthogonal projection of rank }d\},$
the reconstruction error of $P\in{\mathcal P}_{d}$ is defined by
\begin{equation*}
R(P)=\mathbb{E}[ \|X-PX\|^2].
\end{equation*}
The fundamental idea behind PCA is that $P_{\le d}$ satisfies
\begin{equation*}
P_{\le d}\in\argmin_{P\in {\mathcal P}_{d}}\limits R(P).
\end{equation*}
Similarly, the empirical reconstruction error of $P\in{\mathcal P}_{d}$ is defined by
\begin{equation*}
R_n(P)=\frac{1}{n}\sum_{i=1}^n \|X_i-PX_i\|^2,
\end{equation*}
and we have
\begin{equation*}
\hat{P}_{\le d}\in\argmin_{P\in {\mathcal P}_{d}}\limits R_n(P).
\end{equation*}
The excess risk of the PCA projector $\hat{P}_{\le d}$ is defined by
\begin{equation*}
{\mathcal E}^{PCA}_d=R(\hat{P}_{\le d})-\min_{P\in {\mathcal P}_{d}}R(P)=R(\hat{P}_{\le d})-R( P_{\le d}).
\end{equation*}
The excess risk is a natural measure of accuracy of the PCA projector $\hat P_{\le d}$. In Rei\ss{} and Wahl \cite{RW17} it is argued that the excess risk is more adequate for many tasks like reconstruction and prediction than e.g. the Hilbert-Schmidt distance $\|P_{\le d}- \hat{P}_{\le d}\|_2^2$. The main goal of this note is to answer this in the affirmative in the case of the prediction error of PCR.

\subsection{Principal component regression}\label{SecPCR}
%The PCR estimator $\EstPCR$ in dimension $d$ is the least squares estimator corresponding to the linear model spanned by the the first $d$ empirical eigenvectors. 
Let $\hat U_d=\operatorname{span}(\hat u_1,\dots,\hat u_d)$ be the linear subspace spanned by the first $d$ eigenvectors of $\hat \Sigma$. Then the PCR estimator in dimension $d$ is defined by
\begin{equation*}
\EstPCR\in\argmin_{g\in \hat U_d}\limits\|\mathbf{Y}-S_ng\|_n^2
\end{equation*}
with sampling operator $S_n:\mathcal{H}\rightarrow \mathbb{R}^n,h\mapsto (\langle h,X_i\rangle)_{i=1}^n$.
The PCR estimator can be defined explicitly by using the singular value decomposition of $S_n$. In fact, writing
\[
n^{-1/2}S_n=\sum_{j=1}^n\hat\lambda_j^{1/2}\hat v_j\otimes\hat u_j
\]
with orthonormal basis $\hat v_1,\dots, \hat v_n$ of $\mathbb{R}^n$ with respect to the Euclidean inner product on $\mathbb{R}^n$, also denoted by $\langle\cdot,\cdot\rangle$ in what follows (use that $S_n^*S_n=n\hat\Sigma$ with adjoint operator $S_n^*$), we have
\begin{equation}\label{EqRepPCREst}
\EstPCR=n^{-1/2}\sum_{j\le d}\hat\lambda_j^{-1/2}\langle \mathbf{Y},\hat{v}_j\rangle \hat u_j,
\end{equation}
provided that $\hat \lambda_d>0$. Our main goal is to analyse the squared prediction error of PCR given by
\begin{equation}\label{EqPredLoss}
\|\EstPCR-f\|_{L^2(\mathbb{P}^X)}^2=\langle \EstPCR-f,\Sigma(\EstPCR-f)\rangle.
\end{equation}
To avoid technical issues related to the mean squared prediction error of PCR, we define our final estimator by 
\[
\EstTPCR=\EstPCR\text{ if }\hat\lambda_ d\ge \lambda_d/2\text{ and }\EstTPCR= 0\text{ otherwise}.
\]
Note that it is possible to replace $\lambda_d/2$ by a feasible threshold.

\subsection{The oracle estimator}
%The oracle PCR estimator in dimension $d$ is obtained by replacing in \eqref{EqPCRLS} empirical eigenvectors by population eigenvectors. 
Letting $U_d=\operatorname{span}(u_1,\dots,u_d)$, we define the oracle PCR estimator in dimension $d$ by
\[
\EstORA\in\argmin_{g\in  U_d}\limits\|\mathbf{Y}-S_ng\|_n^2.
\] 
The oracle PCR estimator has an analogous decomposition as in \eqref{EqRepPCREst}, with empirical eigenvalues and singular vectors replaced by those corresponding to the projected observations $P_{\le d}X_i$, cf. \eqref{EqDecOracleEst} below. We define our final estimator by $\EstTORA=\EstORA$ if the $d$th largest eigenvalue of the empirical covariance operator  of the $P_{\le d}X_i$ is larger than or equal to $\lambda_d/2$ and $\EstTORA= 0$ otherwise. Using this modification, we prove in Appendix \ref{PropOracleRiskProof} the following standard risk bound, see e.g. \cite[Chapter 11]{GKKW02} for similar results for the linear least squares estimator in random design.
\begin{proposition}\label{PropOracleRisk} Suppose that $X$ is sub-Gaussian with factor $\SGN>0$, as defined in Assumption \ref{SubGauss} below. Then there are constants $c,C>0$ depending only on $\SGN$ such that 
\[
\forall d\le cn,\quad\mathbb{E}\|\EstTORA-f\|_{L^2(\mathbb{P}^X)}^2\le C\Big(\sum_{k>d}\lambda_k\langle f,u_k\rangle^2+\frac{\sigma^2d}{n}+R\Big)
\]
with $R=\lambda_{1}\lambda_{d}^{-1}(\sigma^2+\|f\|_{L^2(\mathbb{P}^X)}^2)e^{-cn}$.
\end{proposition}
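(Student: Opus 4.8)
The plan is to regard $\EstORA$ as the ordinary least squares estimator on the \emph{fixed} $d$-dimensional subspace $U_d$ and to run a bias--variance analysis in the eigenbasis of $\Sigma$, in which $\Sigma$ is diagonal. Set $P:=P_{\le d}$, and let $\Sigma_d=P\Sigma P$, $\hat\Sigma_d=P\hat\Sigma P$ act as operators on $U_d$. On the event $\mathcal A=\{\text{the }d\text{th largest eigenvalue of }\hat\Sigma_d\text{ is }\ge\lambda_d/2\}$, on which $\EstTORA=\EstORA$, the operator $\hat\Sigma_d$ is invertible on $U_d$ with $\|\hat\Sigma_d^{-1}\|_\infty\le 2/\lambda_d$, and solving the normal equations $\hat\Sigma_d\EstORA=\frac1n\sum_{i=1}^n Y_iPX_i$ after inserting $Y_i=\langle f,X_i\rangle+\epsilon_i$ yields
\begin{equation*}
\EstORA=Pf+\underbrace{\hat\Sigma_d^{-1}P\hat\Sigma(I-P)f}_{=:T_1}+\underbrace{\hat\Sigma_d^{-1}\,\frac1n\sum_{i=1}^n\epsilon_iPX_i}_{=:T_2}.
\end{equation*}
Since $\Sigma$ leaves $U_d$ and $U_d^\perp$ invariant, the decomposition $\EstORA-f=(T_1+T_2)-(I-P)f$ is orthogonal in $L^2(\mathbb P^X)$ (its two parts lie in $U_d$ and $U_d^\perp$), whence $\|\EstORA-f\|_{L^2(\mathbb P^X)}^2=\|\Sigma_d^{1/2}(T_1+T_2)\|^2+\sum_{k>d}\lambda_k\langle f,u_k\rangle^2$, the last sum being exactly the first term of the claimed bound. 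It thus suffices to bound $\mathbb E[\|\Sigma_d^{1/2}(T_1+T_2)\|^2\mathbf{1}_{\mathcal A}]$ and $\|f\|_{L^2(\mathbb P^X)}^2\,\mathbb P(\mathcal A^c)$.

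The analysis is driven by the good event $\mathcal G=\{\|\Sigma_d^{-1/2}\hat\Sigma_d\Sigma_d^{-1/2}-I_{U_d}\|_\infty\le 1/2\}$. The whitened projected covariates $Z_i=\Sigma_d^{-1/2}PX_i$ are i.i.d., isotropic, and sub-Gaussian on $U_d\cong\mathbb R^d$ with constant controlled by $\SGN$, and $\Sigma_d^{-1/2}\hat\Sigma_d\Sigma_d^{-1/2}=\frac1n\sum_iZ_i\otimes Z_i$; the standard non-asymptotic bound for sample covariances of sub-Gaussian vectors then gives $\mathbb P(\mathcal G^c)\le Ce^{-cn}$ as soon as $d\le cn$. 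This is the only place where sub-Gaussianity and the restriction $d\le cn$ enter. On $\mathcal G$ one has $\frac12\Sigma_d\preceq\hat\Sigma_d\preceq\frac32\Sigma_d$, so in particular $\mathcal G\subseteq\mathcal A$ and $\hat\Sigma_d^{-1}\preceq 2\Sigma_d^{-1}$.

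For the variance term I condition on the design: using $\mathbb E(\epsilon_i\epsilon_j\mid X_1,\dots,X_n)=\sigma^2\delta_{ij}$ and $\frac1n\sum_iPX_i\otimes PX_i=\hat\Sigma_d$ one finds $\mathbb E[\|\Sigma_d^{1/2}T_2\|^2\mid X_1,\dots,X_n]=\frac{\sigma^2}{n}\tr(\hat\Sigma_d^{-1}\Sigma_d)$. On $\mathcal G$ this trace is $\le 2d$, producing the term $\sigma^2d/n$; on $\mathcal A\setminus\mathcal G$ it is only $\le\frac{2}{\lambda_d}\tr(\Sigma_d)\le 2d\lambda_1/\lambda_d$, so its contribution is $\le\frac{2\sigma^2d\lambda_1}{n\lambda_d}\mathbb P(\mathcal G^c)\le C\sigma^2\frac{\lambda_1}{\lambda_d}e^{-cn}$, absorbed into $R$. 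For the cross term, note that $T_1$ is the least squares coefficient obtained by regressing the responses $r_i=\langle(I-P)f,X_i\rangle$ on the regressors $PX_i$, so its fitted values cannot exceed the responses in empirical norm: $\|\hat\Sigma_d^{1/2}T_1\|^2=\frac1n\sum_i\langle T_1,PX_i\rangle^2\le\frac1n\sum_ir_i^2=\|(I-P)f\|_n^2$. Hence $\|\Sigma_d^{1/2}T_1\|^2\le\|\Sigma_d^{1/2}\hat\Sigma_d^{-1/2}\|_\infty^2\,\|(I-P)f\|_n^2$, where the operator factor equals $\|\Sigma_d^{1/2}\hat\Sigma_d^{-1}\Sigma_d^{1/2}\|_\infty$, which is $\le 2$ on $\mathcal G$ and $\le 2\lambda_1/\lambda_d$ on $\mathcal A$. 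On $\mathcal G$ this gives at most $2\,\mathbb E\|(I-P)f\|_n^2=2\sum_{k>d}\lambda_k\langle f,u_k\rangle^2$; on $\mathcal A\setminus\mathcal G$, Cauchy--Schwarz with the sub-Gaussian fourth-moment bound $\mathbb E\|(I-P)f\|_n^4\le C(\sum_{k>d}\lambda_k\langle f,u_k\rangle^2)^2$ and $\mathbb P(\mathcal G^c)^{1/2}\le Ce^{-cn/2}$ yields a contribution $\le C\frac{\lambda_1}{\lambda_d}\|f\|_{L^2(\mathbb P^X)}^2e^{-cn}$. Finally, on $\mathcal A^c$ the estimator vanishes, contributing $\|f\|_{L^2(\mathbb P^X)}^2\,\mathbb P(\mathcal A^c)\le\|f\|_{L^2(\mathbb P^X)}^2\,\mathbb P(\mathcal G^c)$; collecting all pieces gives the stated bound.

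The delicate point is the remainder $R$, whose spectral dependence must come out as $\lambda_1\lambda_d^{-1}$ and no worse. Bounding $T_1$ on $\mathcal A\setminus\mathcal G$ by the naive estimate $\|\Sigma_d^{1/2}\hat\Sigma_d^{-1}\|_\infty^2\le 4\lambda_1/\lambda_d^2$ would introduce a spurious factor $\lambda_1^2/\lambda_d^2$; the least squares projection inequality $\|\hat\Sigma_d^{1/2}T_1\|^2\le\|(I-P)f\|_n^2$ is precisely what keeps a single power of $\lambda_1/\lambda_d$. The remaining ingredients --- the sub-Gaussian covariance concentration giving $\mathbb P(\mathcal G^c)\le Ce^{-cn}$ and the fourth-moment estimates --- are routine.
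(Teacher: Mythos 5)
Your proposal is correct and takes essentially the same approach as the paper's proof: the identical good event of relative covariance concentration on $U_d$ (your $\mathcal G$ is the paper's $\mathcal E_d$, controlled via sub-Gaussian concentration with probability $1-e^{-cn}$ for $d\le cn$), the same least-squares projection inequality in empirical norm to keep the misspecification term at a single factor $\lambda_1/\lambda_d$ (your bound $\|\hat\Sigma_d^{1/2}T_1\|^2\le\|(I-P)f\|_n^2$ is literally the paper's projection-theorem step), the same conditional variance computation yielding $\sigma^2 d/n$, and the same Cauchy--Schwarz plus sub-Gaussian fourth-moment bounds on the bad events producing $R$. The only differences are cosmetic: you solve the normal equations and stay in the population norm via an exact orthogonal decomposition, where the paper uses the SVD representation and switches to the empirical norm on the good event; and you handle $\mathbb P(\mathcal A^c)$ through the inclusion $\mathcal G\subseteq\mathcal A$, where the paper invokes its eigenvalue concentration result (Proposition \ref{PropEVConc}).
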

 The oracle bias term $\sum_{k>d}\lambda_k\langle f,u_k\rangle^2$ is affected by the fact that $U_d$ does not necessarily have good approximation properties for $f$. In the extreme case, it is equal to $\lambda_{d+1}\|f\|^2$. Under additional assumptions, relating $f$ to the spectral characteristics of $\Sigma$, the latter can be further improved. In this note, we will restrict ourselves to a H\"older-type source condition, in which case the following corollary will serve as a benchmark later on:
\begin{corollary}\label{CorOracleRisk} Grant the assumptions of Proposition \ref{PropOracleRisk}. Suppose that $f=\Sigma^sh$ with $s\ge 0$ and $h\in\mathcal{H}$. Then we have 
\[
\forall d\le cn,\quad\mathbb{E}\|\EstTORA-f\|_{L^2(\mathbb{P}^X)}^2\le C\Big(\lambda_{d+1}^{1+2s}\|h\|^2+\frac{\sigma^2d}{n}+R\Big).
\]
\end{corollary}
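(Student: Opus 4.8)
The plan is to deduce Corollary \ref{CorOracleRisk} directly from Proposition \ref{PropOracleRisk}. Every term appearing in the two bounds coincides except for the oracle bias, so the entire task reduces to showing that under the source condition the term $\sum_{k>d}\lambda_k\langle f,u_k\rangle^2$ is controlled by $\lambda_{d+1}^{1+2s}\|h\|^2$. Once this spectral estimate is established, one simply substitutes it into Proposition \ref{PropOracleRisk} and absorbs constants.

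First I would translate the source condition into the spectral basis. Using the decomposition $\Sigma=\sum_{j\ge 1}\lambda_j P_j$ with $P_j=u_j\otimes u_j$, functional calculus gives $\Sigma^s=\sum_{j\ge 1}\lambda_j^s P_j$, so $f=\Sigma^s h$ yields $\langle f,u_k\rangle=\lambda_k^s\langle h,u_k\rangle$ for every $k$. Substituting this into the bias term produces $\sum_{k>d}\lambda_k\langle f,u_k\rangle^2=\sum_{k>d}\lambda_k^{1+2s}\langle h,u_k\rangle^2$, which cleanly isolates the dependence on the decay of the eigenvalues.

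Next I would exploit the ordering of the eigenvalues. For $k>d$ we have $\lambda_k\le\lambda_{d+1}$, and since the exponent $1+2s$ is nonnegative under the standing assumption $s\ge 0$, monotonicity of $t\mapsto t^{1+2s}$ gives $\lambda_k^{1+2s}\le\lambda_{d+1}^{1+2s}$. Pulling this factor out and invoking Parseval's identity for the orthonormal basis $(u_k)_{k\ge 1}$, namely $\sum_{k>d}\langle h,u_k\rangle^2\le\sum_{k\ge 1}\langle h,u_k\rangle^2=\|h\|^2$, yields $\sum_{k>d}\lambda_k\langle f,u_k\rangle^2\le\lambda_{d+1}^{1+2s}\|h\|^2$, as required.

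There is essentially no analytic obstacle in this corollary: the argument is a short spectral computation, and the nontrivial probabilistic content is entirely inherited from Proposition \ref{PropOracleRisk}. The only point that deserves attention is the nonnegativity of the exponent $1+2s$ in the monotonicity step, which is exactly where $s\ge 0$ enters; for negative $s$ the bound $\lambda_k^{1+2s}\le\lambda_{d+1}^{1+2s}$ would reverse and the tail would instead have to be handled through summability of a weighted series, so the restriction to $s\ge 0$ is what keeps the estimate elementary.
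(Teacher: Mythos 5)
Your proof is correct and matches the argument the paper intends: Corollary \ref{CorOracleRisk} is stated as an immediate consequence of Proposition \ref{PropOracleRisk}, and the only content is precisely the spectral computation you give, namely $\langle f,u_k\rangle=\lambda_k^s\langle h,u_k\rangle$, the monotonicity bound $\lambda_k^{1+2s}\le\lambda_{d+1}^{1+2s}$ for $k>d$ (using $s\ge 0$), and Parseval. Your closing remark on why $s\ge 0$ is needed is a nice sanity check, but there is nothing further to add.
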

 %Sometimes it pays off to choose a different set of principal components, see e.g. \cite[Chapter 8.2]{J02}. Another widely used alternative to PCR is partial least squares, see e.g. \cite[Chapter 8.4]{J02}.

\section{Analysis of the prediction error}

\subsection{The bias-variance decomposition} We start with deriving a bias-variance decomposition of the mean squared prediction error of PCR conditional on the design:
\begin{lemma}\label{LemBVDecCond} If $\hat\lambda_ d> 0$, then we have
\begin{align*}
\mathbb{E}\big(\|\EstPCR-f\|_{L^2(\mathbb{P}^X)}^2|X_1,\dots,X_n\big)=\langle \hat P_{>d}f,\Sigma \hat P_{>d}f\rangle+\frac{\sigma^2}{n}\sum_{j\le d}\hat\lambda_j^{-1}\operatorname{tr}( \hat P_j \Sigma).
\end{align*}
\end{lemma}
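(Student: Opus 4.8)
The plan is to reduce the conditional mean squared prediction error to an algebraic identity by exploiting the explicit representation \eqref{EqRepPCREst} together with the conditional first- and second-moment structure of the noise. First I would write $\mathbf{Y}=S_nf+\boldsymbol{\epsilon}$ following \eqref{EqModel}, and use the singular value decomposition $n^{-1/2}S_n=\sum_{j}\hat\lambda_j^{1/2}\hat v_j\otimes\hat u_j$ to compute, for $j\le d$ (where $\hat\lambda_j>0$ by assumption),
\begin{equation*}
\langle \mathbf{Y},\hat v_j\rangle=\langle f,S_n^*\hat v_j\rangle+\langle\boldsymbol{\epsilon},\hat v_j\rangle=n^{1/2}\hat\lambda_j^{1/2}\langle f,\hat u_j\rangle+\langle\boldsymbol{\epsilon},\hat v_j\rangle,
\end{equation*}
using $S_n^*\hat v_j=n^{1/2}\hat\lambda_j^{1/2}\hat u_j$ and the orthonormality of the $\hat v_j$. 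Substituting into \eqref{EqRepPCREst} then yields the clean decomposition
\begin{equation*}
\EstPCR=\hat P_{\le d}f+N,\qquad N=n^{-1/2}\sum_{j\le d}\hat\lambda_j^{-1/2}\langle\boldsymbol{\epsilon},\hat v_j\rangle\,\hat u_j,
\end{equation*}
so that $\EstPCR-f=-\hat P_{>d}f+N$, where the noise part $N$ is linear in $\boldsymbol{\epsilon}$ and the bias part $\hat P_{>d}f$ is a deterministic function of the design.

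Next I would expand the squared prediction error \eqref{EqPredLoss} as
\begin{equation*}
\|\EstPCR-f\|_{L^2(\mathbb{P}^X)}^2=\langle\hat P_{>d}f,\Sigma\hat P_{>d}f\rangle-2\langle\hat P_{>d}f,\Sigma N\rangle+\langle N,\Sigma N\rangle
\end{equation*}
and take the conditional expectation given $X_1,\dots,X_n$. Since all hatted quantities are measurable with respect to the design, the bias term is unchanged, while the cross term vanishes because $\mathbb{E}(\boldsymbol{\epsilon}\mid X_1,\dots,X_n)=0$ forces $\mathbb{E}(N\mid X_1,\dots,X_n)=0$. For the variance term I would write $\langle N,\Sigma N\rangle=n^{-1}\sum_{j,k\le d}\hat\lambda_j^{-1/2}\hat\lambda_k^{-1/2}\langle\boldsymbol{\epsilon},\hat v_j\rangle\langle\boldsymbol{\epsilon},\hat v_k\rangle\langle\hat u_j,\Sigma\hat u_k\rangle$ and reduce it to the conditional second moments of $\boldsymbol{\epsilon}$.

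The one point that requires a careful argument is the conditional covariance structure of the noise: I would use that the pairs $(Y_i,X_i)$ are independent and that $\epsilon_i=Y_i-\langle f,X_i\rangle$ is a function of $(Y_i,X_i)$, so conditionally on the whole design $\mathbb{E}(\epsilon_i\epsilon_k\mid X_1,\dots,X_n)=\sigma^2\delta_{ik}$ by the model assumptions $\mathbb{E}(\epsilon\mid X)=0$ and $\mathbb{E}(\epsilon^2\mid X)=\sigma^2$. Combined with the orthonormality of the $\hat v_j$ in $\mathbb{R}^n$ this gives $\mathbb{E}(\langle\boldsymbol{\epsilon},\hat v_j\rangle\langle\boldsymbol{\epsilon},\hat v_k\rangle\mid X_1,\dots,X_n)=\sigma^2\delta_{jk}$, so only the diagonal survives and the variance term becomes $(\sigma^2/n)\sum_{j\le d}\hat\lambda_j^{-1}\langle\hat u_j,\Sigma\hat u_j\rangle$. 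Finally I would identify $\langle\hat u_j,\Sigma\hat u_j\rangle=\operatorname{tr}(\hat P_j\Sigma)$ via the rank-one trace identity $\operatorname{tr}((\hat u_j\otimes\hat u_j)\Sigma)=\langle\hat u_j,\Sigma\hat u_j\rangle$, which completes the claimed decomposition. The main obstacle is thus not analytic but one of bookkeeping: correctly propagating the design-conditioning through the singular value decomposition and rigorously justifying the conditional uncorrelatedness of the $\epsilon_i$; everything else is a direct computation.
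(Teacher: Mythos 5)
Your proposal is correct and follows essentially the same route as the paper: insert $\mathbf{Y}=S_nf+\boldsymbol{\epsilon}$ into \eqref{EqRepPCREst} to obtain the decomposition $\EstPCR=\hat P_{\le d}f+N$, expand the prediction error into bias, cross, and noise terms, and use that conditional on the design the $\langle\boldsymbol{\epsilon},\hat v_j\rangle$ are centered, uncorrelated, with variance $\sigma^2$. The only difference is that you spell out the intermediate steps (the identity $S_n^*\hat v_j=n^{1/2}\hat\lambda_j^{1/2}\hat u_j$, the conditional covariance $\mathbb{E}(\epsilon_i\epsilon_k\mid X_1,\dots,X_n)=\sigma^2\delta_{ik}$, and the rank-one trace identity) which the paper leaves implicit.
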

\begin{proof}
Suppose that $\hat \lambda_d>0$. Inserting $\mathbf{Y}=S_nf+\boldsymbol{\epsilon}$ into \eqref{EqRepPCREst}, we have
\begin{equation}\label{EqDecEstPCR}
\EstPCR=\hat P_{\le d}f+n^{-1/2}\sum_{j\le d}\hat\lambda_j^{-1/2}\langle \boldsymbol{\epsilon},\hat{v}_j\rangle \hat u_j.
\end{equation}
Inserting \eqref{EqDecEstPCR} and the identity $f=\hat P_{\le d}f+\hat P_{> d}f$ into \eqref{EqPredLoss}, we get
\begin{align*}
\|\EstPCR-f\|_{L^2(\mathbb{P}^X)}^2&=\langle \hat P_{>d}f,\Sigma \hat P_{>d}f\rangle
-2n^{-1/2}\sum_{j\le d}\hat\lambda_j^{-1/2}\langle \hat P_{> d}f, \Sigma \hat u_j\rangle\langle \boldsymbol{\epsilon},\hat{v}_j\rangle\\
&+n^{-1}\sum_{j\le d}\sum_{k\le d}\hat\lambda_j^{-1/2}\hat\lambda_k^{-1/2}\langle\hat u_j,\Sigma \hat u_k\rangle\langle \boldsymbol{\epsilon},\hat{v}_j\rangle\langle \boldsymbol{\epsilon},\hat{v}_k\rangle.
\end{align*}
The result now follows from the fact that, conditional on the design, the $\langle \boldsymbol{\epsilon}, \hat v_j\rangle$ are uncorrelated, each with expectation zero and variance $\sigma^2$.
\end{proof}

\subsection{Relating the bias to the excess risk} The following lemma shows a clear connection of the bias term in Lemma  \ref{LemBVDecCond} with the oracle bias term in Proposition \ref{PropOracleRisk} and the excess risk:
\begin{lemma}\label{EqPEERBias} We have
\begin{equation*}
\langle \hat P_{>d}f, \Sigma \hat P_{>d}f\rangle=\Big\|\Big(\sum_{k>d}\lambda_k^{1/2}P_k+\sum_{j\le d}\lambda_j^{1/2}P_j\hat P_{>d}-\sum_{k>d}\lambda_k^{1/2}P_k\hat P_{\le d}\Big)f\Big\|^2.
\end{equation*}
\end{lemma}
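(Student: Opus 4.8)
The plan is to recognise the left-hand side as a squared norm under the self-adjoint square root of $\Sigma$ and then to match operators directly. Since $\Sigma$ is positive and self-adjoint with spectral decomposition \eqref{EqSpecDec}, its square root is $\Sigma^{1/2}=\sum_{j\ge 1}\lambda_j^{1/2}P_j$, the partial sums converging in operator norm because $\lambda_j\to 0$. Hence $\langle \hat P_{>d}f,\Sigma\hat P_{>d}f\rangle=\|\Sigma^{1/2}\hat P_{>d}f\|^2$, and it suffices to establish the operator identity
\[
\Sigma^{1/2}\hat P_{>d}=\sum_{k>d}\lambda_k^{1/2}P_k+\sum_{j\le d}\lambda_j^{1/2}P_j\hat P_{>d}-\sum_{k>d}\lambda_k^{1/2}P_k\hat P_{\le d},
\]
after which applying both sides to $f$ and taking squared norms yields the claim.

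To prove this identity, I would first split the spectral sum defining $\Sigma^{1/2}$ according to the ranges $j\le d$ and $k>d$, writing
\[
\Sigma^{1/2}\hat P_{>d}=\sum_{j\le d}\lambda_j^{1/2}P_j\hat P_{>d}+\sum_{k>d}\lambda_k^{1/2}P_k\hat P_{>d}.
\]
Then I would rewrite the second sum using $\hat P_{>d}=I-\hat P_{\le d}$, which turns $\sum_{k>d}\lambda_k^{1/2}P_k\hat P_{>d}$ into $\sum_{k>d}\lambda_k^{1/2}P_k-\sum_{k>d}\lambda_k^{1/2}P_k\hat P_{\le d}$. Substituting this back reproduces exactly the three-term operator on the right-hand side, completing the identity.

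The computation is entirely algebraic, so there is no substantial obstacle; the only points requiring a little care are the validity of the spectral square-root representation (guaranteed by $\Sigma$ being positive trace class together with the normalisation $\sum_{j\ge 1}P_j=I$) and keeping track of the signs when substituting $\hat P_{>d}=I-\hat P_{\le d}$. It is also worth noting how the three terms are to be read: the first term $\sum_{k>d}\lambda_k^{1/2}P_k$ is the oracle part, since its application to $f$ has squared norm $\sum_{k>d}\lambda_k\langle f,u_k\rangle^2$, which is exactly the oracle bias of Proposition \ref{PropOracleRisk}, while the two remaining cross terms encode the perturbation of the empirical projector $\hat P_{\le d}$ relative to its population counterpart $P_{\le d}$ and will later be controlled by the excess risk $\mathcal{E}^{PCA}_d$.
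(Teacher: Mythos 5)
Your proof is correct and follows essentially the same route as the paper: both write $\langle \hat P_{>d}f,\Sigma\hat P_{>d}f\rangle=\|\Sigma^{1/2}\hat P_{>d}f\|^2$ and derive the operator identity for $\Sigma^{1/2}\hat P_{>d}$ from the spectral decomposition \eqref{EqSpecDec} together with $\hat P_{>d}=I-\hat P_{\le d}$. Your version merely spells out the splitting of the spectral sum and the substitution step that the paper compresses into one line, so there is nothing to add.
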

The squared norm of $\sum_{k>d}\lambda_k^{1/2}P_kf$ is equal to the oracle bias term in Proposition \ref{PropOracleRisk}. On the other hand, the remaining part in Lemma \ref{EqPEERBias} is connected to the excess risk. In fact, \cite[Lemma 2.6]{RW17} says that for any $\mu\in\R$, we have
\begin{align}\label{EqERDec}
{\mathcal E}^{PCA}_d={\mathcal E}^{PCA}_{\le d}(\mu)+{\mathcal E}^{PCA}_{> d}(\mu)
\end{align}
with 
\begin{equation*}
{\mathcal E}^{PCA}_{\le d}(\mu)=\sum_{j\le d}(\lambda_j-\mu)\| P_j\hat{P}_{> d}\|^2
,\quad
{\mathcal E}^{PCA}_{> d}(\mu)=\sum_{k>d}(\mu-\lambda_{k})\| P_k\hat{P}_{\le d}\|^2.
\end{equation*} 
If the terms in the sums are non-negative, then the Hilbert-Schmidt norm can be moved outside the sum, leading to a similar structure as in Lemma~\ref{EqPEERBias}.
\begin{proof}[Proof of Lemma \ref{EqPEERBias}]
By \eqref{EqSpecDec} and the identity $\hat P_{>d}=I-\hat P_{\le d}$, we have
\begin{align*}
\Sigma^{1/2} \hat P_{>d}=\sum_{j\le d}\lambda_j^{1/2}P_j\hat P_{>d}-\sum_{k>d}\lambda_k^{1/2}P_k\hat P_{\le d}+\sum_{k>d}\lambda_k^{1/2}P_k.
\end{align*}
Inserting this into $\langle  \hat P_{>d}f, \Sigma\hat P_{>d}f\rangle=\|\Sigma^{1/2} \hat P_{>d}f\|^2$, the claim follows.
\end{proof}
The next result gives a first quantitative version of Lemma \ref{EqPEERBias}:
\begin{proposition} \label{ResultBiasPre}
For every $r\le d$, we have
\begin{align*}
\langle \hat P_{>d}f,  \Sigma\hat P_{>d}f\rangle&\le \lambda_{r+1}\|f\|^2+\Big\|\sum_{j\le r}(\lambda_j-\lambda_{r+1})^{1/2}P_j\hat P_{>r}\Big\|_\infty^2\|f\|^2.
\end{align*}
In particular, for every $r\le d$, we have
\begin{align*}
\langle \hat P_{>d}f,  \Sigma\hat P_{>d}f\rangle&\le \lambda_{r+1}\|f\|^2+{\mathcal E}^{PCA}_{\le r}(\lambda_{r+1})\|f\|^2.
\end{align*}
\end{proposition}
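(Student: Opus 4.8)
The plan is to rewrite the bias as a quadratic form and use operator monotonicity to throw away the part of the spectrum of $\Sigma$ lying below the threshold $\lambda_{r+1}$. First I would write $\langle\hat P_{>d}f,\Sigma\hat P_{>d}f\rangle=\langle f,\hat P_{>d}\Sigma\hat P_{>d}f\rangle$ and split $\Sigma-\lambda_{r+1}I=\sum_{j\ge 1}(\lambda_j-\lambda_{r+1})P_j=B_+-B_-$, where $B_+=\sum_{j\le r}(\lambda_j-\lambda_{r+1})P_j\ge 0$ collects the eigenvalues at or above the threshold and $B_-=\sum_{j>r+1}(\lambda_{r+1}-\lambda_j)P_j\ge 0$ collects those strictly below it (the $j=r+1$ term vanishes). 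Since $B_-\ge 0$ forces $\hat P_{>d}B_-\hat P_{>d}\ge 0$, discarding it yields the operator inequality $\hat P_{>d}\Sigma\hat P_{>d}\le \lambda_{r+1}\hat P_{>d}+\hat P_{>d}B_+\hat P_{>d}$.

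Evaluating the quadratic form at $f$, the first term contributes $\lambda_{r+1}\|\hat P_{>d}f\|^2\le\lambda_{r+1}\|f\|^2$, while the second equals $\|B_+^{1/2}\hat P_{>d}f\|^2$ with $B_+^{1/2}=\sum_{j\le r}(\lambda_j-\lambda_{r+1})^{1/2}P_j$. Here is the bookkeeping step that passes from $d$ to $r$: because $r\le d$ we have $\hat P_{>r}\hat P_{>d}=\hat P_{>d}$, so $B_+^{1/2}\hat P_{>d}=(B_+^{1/2}\hat P_{>r})\hat P_{>d}$, and pulling out the operator norm gives $\|B_+^{1/2}\hat P_{>d}f\|\le\|B_+^{1/2}\hat P_{>r}\|_\infty\|\hat P_{>d}f\|\le\|B_+^{1/2}\hat P_{>r}\|_\infty\|f\|$. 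Since $B_+^{1/2}\hat P_{>r}=\sum_{j\le r}(\lambda_j-\lambda_{r+1})^{1/2}P_j\hat P_{>r}$, this is exactly the first claimed inequality.

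For the ``in particular'' part it remains to bound $\|B_+^{1/2}\hat P_{>r}\|_\infty^2$ by ${\mathcal E}^{PCA}_{\le r}(\lambda_{r+1})$. Setting $M=B_+^{1/2}\hat P_{>r}$, the $C^\ast$-identity gives $\|M\|_\infty^2=\|M^\ast M\|_\infty=\|\hat P_{>r}B_+\hat P_{>r}\|_\infty$, and since $\hat P_{>r}B_+\hat P_{>r}$ is positive its operator norm is at most its trace. Using that each $P_j\hat P_{>r}$ is rank one, so that $\operatorname{tr}(P_j\hat P_{>r})=\|\hat P_{>r}u_j\|^2=\|P_j\hat P_{>r}\|^2$, cyclicity of the trace then identifies $\operatorname{tr}(\hat P_{>r}B_+\hat P_{>r})=\sum_{j\le r}(\lambda_j-\lambda_{r+1})\|P_j\hat P_{>r}\|^2$, which is precisely ${\mathcal E}^{PCA}_{\le r}(\lambda_{r+1})$ by the decomposition \eqref{EqERDec}.

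The genuinely delicate points are the two positivity arguments rather than any computation: discarding $\hat P_{>d}B_-\hat P_{>d}\ge 0$, which is what lets the sub-threshold spectrum be dropped at no cost, and bounding the operator norm of the positive operator $\hat P_{>r}B_+\hat P_{>r}$ by its trace, which is what converts an operator-norm quantity into the excess risk. Everything else is the threshold decomposition of $\Sigma$ at level $\lambda_{r+1}$ together with the elementary identity $\hat P_{>r}\hat P_{>d}=\hat P_{>d}$, the latter being what allows the final bound to be phrased with the sharper projector $\hat P_{>r}$.
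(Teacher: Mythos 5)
Your proof is correct and is essentially the paper's argument in coordinate-free form: your threshold decomposition $\Sigma=\lambda_{r+1}I+B_+-B_-$ with the positive term $\hat P_{>d}B_-\hat P_{>d}$ discarded is exactly the paper's eigenbasis expansion of $\|\Sigma^{1/2}\hat P_{>d}f\|^2$ with the non-positive coefficients $(\lambda_j-\lambda_{r+1})$, $r<j$, dropped, and both arguments pass from $d$ to $r$ through the same identity $\hat P_{>r}\hat P_{>d}=\hat P_{>d}$. For the second claim the paper simply bounds the operator norm by the Hilbert-Schmidt norm, and your $C^*$-identity-plus-trace argument is precisely a proof of that standard inequality, so the two proofs coincide there as well.
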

The flexibility in $r$ allows us to deal with the case of small or vanishing spectral gaps, see e.g.  Corollary \ref{CorApproxPolDecay} below. The first term $\lambda_{r+1}\|f\|^2$ can be replaced by $2\lambda_{r+1}\|P_{>r }f\|^2+2\lambda_{r+1}\|\hat P_{\le r}-P_{\le r}\|_\infty^2\|f\|^2$, cf. \eqref{EqBiasDecER} below. Since this leads to stronger subspace distances, such improvements are not pursued here.

\begin{proof}
Applying \eqref{EqSpecDec}, we have 
\begin{equation*}
\langle\hat P_{>d}f, \Sigma\hat P_{>d}f\rangle=\|\Sigma^{1/2} \hat P_{>d}f\|^2=\sum_{j\le d}\lambda_j\|P_j\hat P_{>d}f\|^2+\sum_{k>d}\lambda_k\|P_k\hat P_{>d}f\|^2.
\end{equation*}
For $r\le d$, the first term on the right-hand is bounded as follows: 
\begin{align*}
\sum_{j\le d}\lambda_j\|P_j\hat P_{>d}f\|^2&\le\sum_{j\le r}(\lambda_j-\lambda_{r+1})\|P_j\hat P_{>d}f\|^2+\lambda_{r+1}\|P_{\le d}\hat P_{>d}f\|^2\nonumber\\
&=\Big\|\sum_{j\le r}(\lambda_j-\lambda_{r+1})^{1/2}P_j\hat P_{>d}f\Big\|^2+\lambda_{r+1}\|P_{\le d}\hat P_{>d}f\|^2.
\end{align*}
Similarly, we have
\begin{equation*}
\sum_{k>d}\lambda_k\|P_k\hat P_{>d}f\|^2\le \lambda_{r+1}\|P_{>d}\hat P_{>d}f\|^2.
\end{equation*}
Using also that $\|P_{\le d}\hat P_{>d}f\|^2+\|P_{>d}\hat P_{>d}f\|^2=\|\hat P_{>d}f\|^2$, we arrive at
\begin{equation}\label{EqBiasDecER}
\langle\hat P_{>d}f, \Sigma\hat P_{>d}f\rangle\le \Big\|\sum_{j\le r}(\lambda_j-\lambda_{r+1})^{1/2}P_j\hat P_{>d}f\Big\|^2+\lambda_{r+1}\|\hat P_{>d}f\|^2.
\end{equation}
Now, the first claim follows from \eqref{EqBiasDecER}, using that $\|A\hat P_{>d}\|_\infty=\|\hat P_{>d}A\|_\infty\le\|\hat P_{>r}A\|_\infty$ for every bounded, self-adjoint operator $A$ on $\mathcal{H}$. Moreover, the second claim follows from the first one by bounding the operator norm by the Hilbert-Schmidt norm.
\end{proof}

Under an additional source condition, we have the following improvement over Proposition \ref{ResultBiasPre}, cf. Corollary \ref{CorOracleRisk} above:
\begin{proposition} \label{ResultBias}
Suppose that $f=\Sigma^sh$ with $s>0$ and $h\in\mathcal{H}$. Then, for every $r\le d$, we have
\begin{align*}
\langle \hat P_{>d}f,  \Sigma\hat P_{>d}f\rangle&\le C\big(\lambda_{r+1}^{1+2s}+{\mathcal E}^{PCA}_{\le r}(\lambda_{r+1})\big)\|h\|^2
\end{align*}
with a constant $C>0$ depending only on $s$ and $\lambda_1$.
\end{proposition}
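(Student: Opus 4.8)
The plan is to build on the bound already obtained in the proof of Proposition \ref{ResultBiasPre}, namely \eqref{EqBiasDecER},
\begin{equation*}
\langle\hat P_{>d}f,\Sigma\hat P_{>d}f\rangle\le \Big\|\sum_{j\le r}(\lambda_j-\lambda_{r+1})^{1/2}P_j\hat P_{>d}f\Big\|^2+\lambda_{r+1}\|\hat P_{>d}f\|^2,
\end{equation*}
and to treat the two terms separately. The first term is handled exactly as in Proposition \ref{ResultBiasPre}: bounding the operator norm by the Hilbert--Schmidt norm gives ${\mathcal E}^{PCA}_{\le r}(\lambda_{r+1})\|f\|^2$, and since $\|f\|^2=\|\Sigma^sh\|^2\le\lambda_1^{2s}\|h\|^2$, this is at most $\lambda_1^{2s}{\mathcal E}^{PCA}_{\le r}(\lambda_{r+1})\|h\|^2$. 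All the real work therefore lies in improving the crude estimate $\lambda_{r+1}\|\hat P_{>d}f\|^2\le\lambda_{r+1}\|f\|^2$ used before: under the source condition it must become $\lambda_{r+1}^{1+2s}\|h\|^2$ up to an excess-risk remainder, which is a genuine gain since $\lambda_{r+1}^{1+2s}\le\lambda_1^{2s}\lambda_{r+1}$.

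To refine $\lambda_{r+1}\|\hat P_{>d}f\|^2$, I would first bound $\|\hat P_{>d}f\|\le\|\hat P_{>r}f\|$ (using $\hat P_{>d}=\hat P_{>d}\hat P_{>r}$ since $d\ge r$) and insert $f=\Sigma^sh$. The key device is a dyadic split of the low-frequency block: let $m\le r$ be the largest index with $\lambda_m\ge2\lambda_{r+1}$ (and $m=0$ if there is none), and write $\hat P_{>r}f=\hat P_{>r}\Sigma^sP_{\le m}h+\hat P_{>r}\Sigma^sP_{>m}h$. For the high part one has $\|\hat P_{>r}\Sigma^sP_{>m}h\|\le\|\Sigma^sP_{>m}\|_\infty\|h\|=\lambda_{m+1}^s\|h\|\le2^s\lambda_{r+1}^s\|h\|$, since $\lambda_{m+1}<2\lambda_{r+1}$ by maximality of $m$; after multiplying by $\lambda_{r+1}$ this contributes a term of order $\lambda_{r+1}^{1+2s}\|h\|^2$. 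For the low part, expanding $\hat P_{>r}\Sigma^sP_{\le m}h=\sum_{j\le m}\lambda_j^s\langle h,u_j\rangle\hat P_{>r}u_j$ and applying the triangle inequality followed by Cauchy--Schwarz yields $\|\hat P_{>r}\Sigma^sP_{\le m}h\|^2\le\big(\sum_{j\le m}\lambda_j^{2s}\|\hat P_{>r}u_j\|^2\big)\|h\|^2$.

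It then remains to compare $\lambda_{r+1}\sum_{j\le m}\lambda_j^{2s}\|\hat P_{>r}u_j\|^2$ with ${\mathcal E}^{PCA}_{\le r}(\lambda_{r+1})=\sum_{j\le r}(\lambda_j-\lambda_{r+1})\|\hat P_{>r}u_j\|^2$, where I use $\|\hat P_{>r}u_j\|^2=\|P_j\hat P_{>r}\|^2$. Here the dyadic threshold pays off: for $j\le m$ one has $\lambda_j\ge2\lambda_{r+1}$, hence $\lambda_{r+1}\le\lambda_j-\lambda_{r+1}$ and therefore $\lambda_{r+1}\lambda_j^{2s}\le\lambda_1^{2s}(\lambda_j-\lambda_{r+1})$, so the sum is dominated term by term by $\lambda_1^{2s}{\mathcal E}^{PCA}_{\le r}(\lambda_{r+1})$. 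Collecting the three contributions, and using $(a+b)^2\le2a^2+2b^2$ for the high/low split, yields the claim with $C$ depending only on $s$ and $\lambda_1$. I expect the only real obstacle to be the behaviour of the boundary eigenvalues $\lambda_j\approx\lambda_{r+1}$: there the naive term-wise comparison $\lambda_{r+1}\lambda_j^{2s}\lesssim\lambda_j-\lambda_{r+1}$ fails because the right-hand side degenerates, and the whole purpose of the split at $m$ is to absorb these near-boundary frequencies (together with the genuinely high ones) into the $\lambda_{r+1}^{1+2s}$ term via the operator norm of $\Sigma^sP_{>m}$, rather than into the excess risk.
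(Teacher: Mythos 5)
Your proof is correct, and it shares the paper's overall skeleton---both start from \eqref{EqBiasDecER}, handle the term involving $\sum_{j\le r}(\lambda_j-\lambda_{r+1})^{1/2}P_j\hat P_{>d}$ exactly as in Proposition \ref{ResultBiasPre} (together with $\|f\|^2\le\lambda_1^{2s}\|h\|^2$), and reduce everything to showing $\lambda_{r+1}\|\hat P_{>r}f\|^2\le C\big(\lambda_{r+1}^{1+2s}+{\mathcal E}^{PCA}_{\le r}(\lambda_{r+1})\big)\|h\|^2$---but the decisive step is carried out by a genuinely different device. The paper splits $f=P_{\le r}f+P_{>r}f$ at the index $r$ itself and then splits each coefficient multiplicatively, $\lambda_j^s=(\lambda_j^s-\lambda_{r+1}^s)+\lambda_{r+1}^s$, as in \eqref{EqBiasDecER5}; the near-boundary eigenvalues $\lambda_j\approx\lambda_{r+1}$ are rendered harmless because $(\lambda_j^s-\lambda_{r+1}^s)^2$ vanishes along with $\lambda_j-\lambda_{r+1}$, and quantifying this requires the elementary inequality $\lambda_{r+1}(\lambda_j^s-\lambda_{r+1}^s)^2\le C(\lambda_j-\lambda_{r+1})$, proved by a case distinction between $s\le 1/2$ and $s>1/2$. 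You instead cut at the threshold index $m=\max\{j:\lambda_j\ge 2\lambda_{r+1}\}$, so that on $\{j\le m\}$ the comparison $\lambda_{r+1}\lambda_j^{2s}\le\lambda_1^{2s}(\lambda_j-\lambda_{r+1})$ is immediate from $\lambda_j-\lambda_{r+1}\ge\lambda_{r+1}$, while the problematic frequencies with $\lambda_{r+1}\le\lambda_j<2\lambda_{r+1}$ are absorbed into the operator-norm bound $\|\Sigma^sP_{>m}\|_\infty=\lambda_{m+1}^s<2^s\lambda_{r+1}^s$; your closing diagnosis that the term-wise comparison must fail near the boundary, and that the threshold exists precisely to sidestep this, is exactly the difficulty the paper instead resolves analytically. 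What each buys: your route avoids any calculus on $x\mapsto x^s$ and the attendant case analysis, giving a shorter and arguably more transparent argument with a constant of order $\max(\lambda_1^{2s},4^s)$; the paper's multiplicative split needs no auxiliary index and yields the comparison uniformly over all $j\le r$, at the price of the two-case elementary inequality. Both arguments give the claim with a constant depending only on $s$ and $\lambda_1$.
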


\begin{proof}
In view of \eqref{EqBiasDecER}, it suffices to show that 
\begin{equation}\label{EqBiasDecER2}
\lambda_{r+1}\|\hat P_{>d}f\|^2\le \lambda_{r+1}\|\hat P_{>r}f\|^2\le C\big(\lambda_{r+1}^{1+2s}+{\mathcal E}^{PCA}_{\le r}(\lambda_{r+1})\big)\|h\|^2
\end{equation}
with a constant $C>0$ depending only on $s$ and $\lambda_1$. First, we have
\begin{align}\label{EqBiasDecER3}
\|\hat P_{>r}f\|^2= \|\hat P_{>r}(P_{>r}+P_{\le r})f\|^2\le 2\|P_{>r}f\|^2+2\|\hat P_{>r}P_{\le r}f\|^2.
\end{align} 
Using that $f=\Sigma^sh$ we get
\begin{equation}\label{EqBiasDecER4}
\|P_{>r}f\|^2\le \lambda_{r+1}^{2s}\|h\|^2
\end{equation}
as well as
\begin{align}
\|\hat P_{>r}P_{\le r}f\|^2&=\|\sum_{j\le r}\lambda_j^{s}\hat P_{>r}P_{j}h\|^2\nonumber\\
&\le 2\|\sum_{j\le r}(\lambda_j^{s}-\lambda_{r+1}^s)\hat P_{>r}P_{j}h\|^2+2\lambda_{r+1}^{2s}\|\hat P_{>r}P_{\le r}h\|^2\nonumber\\
&\le 2\sum_{j\le r}(\lambda_j^{s}-\lambda_{r+1}^s)^2\|P_{j}\hat P_{>r}\|^2\|h\|^2+2\lambda_{r+1}^{2s}\|h\|^2\label{EqBiasDecER5}.
\end{align}
Combining \eqref{EqBiasDecER2}-\eqref{EqBiasDecER5}, the claim follows if we can show that $\lambda_{r+1}(\lambda_j^{s}-\lambda_{r+1}^s)^2\le C(\lambda_j-\lambda_{r+1})$ for every $j\le r$ with a constant $C>0$ depending only on $s$ and $\lambda_1$. For $s\le 1/2$ this follows from $\lambda_{r+1}(\lambda_j^{s}-\lambda_{r+1}^s)^2\le \lambda_{r+1}(\lambda_j^{2s}-\lambda_{r+1}^{2s})\le \lambda_{r+1}^{2s}(\lambda_j-\lambda_{r+1})$, for $s>1/2$ from $(\lambda_j^{s}-\lambda_{r+1}^s)^2\le \lambda_j^{2s}-\lambda_{r+1}^{2s}\le 2s\lambda_j^{2s-1}(\lambda_j-\lambda_{r+1})\le 2s\lambda_1^{2s-1}(\lambda_j-\lambda_{r+1})$.
\end{proof}

\subsection{Analysis of the variance}
The variance term in Lemma \ref{LemBVDecCond} contains trace scalar products with $\Sigma$, similarly as the excess risk in the representation ${\mathcal E}^{PCA}_d=\operatorname{tr}( \Sigma(P_{\le d}-\hat{P}_{\le d}))$, see e.g. \cite[Equation (2.3)]{RW17}. The main difference is the weighting by the $\hat\lambda_j^{-1}$. Grouping together eigenvalues which are of comparable magnitude, one can bound the variance by a constant times $\sigma^2dn^{-1}$ plus remainder terms having a similar structure as the representation of the excess risk given in \eqref{EqERDec}.

\begin{proposition}\label{ResultVar}
Let $0=r_0< r_1< \dots< r_{d'}$ be natural numbers such that $d\le r_{d'}\le C_1d$ and $\lambda_{r_{l}}^{-1}\lambda_{r_{l-1}+1}\le C_2$ for every $l\le d'$. If $\hat \lambda_d\ge \lambda_d/2$, then 
\begin{align*}
&\sum_{j\le d}\hat\lambda_j^{-1}\operatorname{tr} (\hat P_j\Sigma)\\
&\le 2C_1C_2d+2\sum_{2 \le l\le d'}\lambda_{r_{l}}^{-1}\sum_{k\le r_{l-1}}(\lambda_k-\lambda_{r_{l-1}+1})\|\hat P_{J_l}P_k\|_2^2+2\lambda_1\lambda_d^{-1}\sum_{j\le d}\mathbbm{1}(\hat\lambda_j< \lambda_j/2)
\end{align*}
with $J_l=\{r_{l-1}+1,\dots, r_l\}$ and $\hat P_{J_l}=\sum_{j\in J_l}\hat P_j$.
\end{proposition}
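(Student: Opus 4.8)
The plan is to start from the pointwise identity $\operatorname{tr}(\hat P_j\Sigma)=\langle\hat u_j,\Sigma\hat u_j\rangle=\sum_{k\ge1}\lambda_k\|P_k\hat P_j\|_2^2$, which follows from the spectral decomposition \eqref{EqSpecDec} together with $\|P_k\hat P_j\|_2^2=\langle u_k,\hat u_j\rangle^2$. I would then split the outer sum $\sum_{j\le d}$ according to whether $\hat\lambda_j\ge\lambda_j/2$ (the \emph{good} indices) or $\hat\lambda_j<\lambda_j/2$ (the \emph{bad} indices). Since all summands are nonnegative, this splits the total into two groups that can be bounded independently, and the bad group will produce the last term on the right-hand side.

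For the bad indices I would use that $\hat\lambda_j\ge\hat\lambda_d\ge\lambda_d/2$ for $j\le d$ gives $\hat\lambda_j^{-1}\le2\lambda_d^{-1}$, while $\operatorname{tr}(\hat P_j\Sigma)\le\lambda_1\sum_k\|P_k\hat P_j\|_2^2=\lambda_1\|\hat P_j\|_2^2=\lambda_1$; multiplying and summing only over the bad indices reproduces $2\lambda_1\lambda_d^{-1}\sum_{j\le d}\mathbbm{1}(\hat\lambda_j<\lambda_j/2)$. For a good index $j\in J_l$ I would use $\hat\lambda_j^{-1}\le2\lambda_j^{-1}\le2\lambda_{r_l}^{-1}$ (as $j\le r_l$ forces $\lambda_j\ge\lambda_{r_l}$), and then, using $\{1,\dots,d\}\subseteq\bigcup_{l\le d'}J_l$ and exploiting nonnegativity to extend each block sum over all of $J_l$, collapse the per-eigenvector contributions into $2\sum_{l\le d'}\lambda_{r_l}^{-1}\operatorname{tr}(\hat P_{J_l}\Sigma)$, where $\operatorname{tr}(\hat P_{J_l}\Sigma)=\sum_k\lambda_k\|P_k\hat P_{J_l}\|_2^2$ by additivity of $\|P_k\,\cdot\,\|_2^2$ over the orthogonal family $\{\hat P_j\}_{j\in J_l}$ (and $\|P_k\hat P_{J_l}\|_2=\|\hat P_{J_l}P_k\|_2$).

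The crux is the per-block decomposition of $\operatorname{tr}(\hat P_{J_l}\Sigma)$ so as to match the excess-risk structure of \eqref{EqERDec}. Here I would split $\sum_k\lambda_k\|P_k\hat P_{J_l}\|_2^2$ at $k=r_{l-1}$, writing $\lambda_k=(\lambda_k-\lambda_{r_{l-1}+1})+\lambda_{r_{l-1}+1}$ on the low-index part and bounding $\lambda_k\le\lambda_{r_{l-1}+1}$ on the high-index part. Because $\sum_k\|P_k\hat P_{J_l}\|_2^2=\|\hat P_{J_l}\|_2^2=r_l-r_{l-1}$, the two $\lambda_{r_{l-1}+1}$-contributions recombine into $\lambda_{r_{l-1}+1}(r_l-r_{l-1})$, leaving exactly $\sum_{k\le r_{l-1}}(\lambda_k-\lambda_{r_{l-1}+1})\|\hat P_{J_l}P_k\|_2^2$. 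After multiplying by $2\lambda_{r_l}^{-1}$, the residual is at most $2C_2(r_l-r_{l-1})$ by the hypothesis $\lambda_{r_l}^{-1}\lambda_{r_{l-1}+1}\le C_2$, so summing telescopes to $2C_2r_{d'}\le2C_1C_2d$ and gives the first term; the $l=1$ block contributes nothing to the excess-risk part since $r_0=0$, leaving the sum over $2\le l\le d'$ to match the middle term. The main obstacle is purely organizational: keeping the three groups disjoint, checking that the nonnegativity arguments legitimately extend the block sums, and arranging the add-and-subtract so that the leftover block-size contributions telescope cleanly under the two stated hypotheses on the $r_l$.
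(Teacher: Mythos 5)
Your proposal is correct and follows essentially the same route as the paper's proof: the good/bad index split with $\hat\lambda_j^{-1}\le\hat\lambda_d^{-1}\le 2\lambda_d^{-1}$ and $\operatorname{tr}(\hat P_j\Sigma)\le\lambda_1$ for the bad part, the blockwise bound $\hat\lambda_j^{-1}\le 2\lambda_{r_l}^{-1}$ collapsing the good part to $2\sum_{l}\lambda_{r_l}^{-1}\operatorname{tr}(\hat P_{J_l}\Sigma)$, and the add-and-subtract of $\lambda_{r_{l-1}+1}$ at the cutoff $k=r_{l-1}$ so that the residual $\lambda_{r_{l-1}+1}(r_l-r_{l-1})$ telescopes to $2C_1C_2d$ under the two hypotheses. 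The only cosmetic difference is that the paper treats the $l=1$ block separately via $\operatorname{tr}(\hat P_{J_1}\Sigma)\le\lambda_1 r_1$, whereas you absorb it uniformly by noting the $k\le r_0=0$ sum is empty; these are identical computations.
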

\begin{proof}
If $\hat \lambda_d\ge \lambda_d/2$, then 
\begin{align*}
&\sum_{j\le d}\hat\lambda_j^{-1}\operatorname{tr} (\hat P_j\Sigma)\nonumber\\
&=\sum_{j\le d}\hat\lambda_j^{-1}\operatorname{tr} (\hat P_j\Sigma)\mathbbm{1}(\hat\lambda_j\ge \lambda_j/2)+\sum_{j\le d}\hat\lambda_j^{-1}\operatorname{tr} (\hat P_j\Sigma)\mathbbm{1}(\hat\lambda_j< \lambda_j/2)\nonumber\\
& \le 2\sum_{j\le d}\lambda_j^{-1}\operatorname{tr} (\hat P_j\Sigma)+2\lambda_1\lambda_d^{-1}\sum_{j\le d}\mathbbm{1}(\hat\lambda_j< \lambda_j/2),
\end{align*}
where we applied $\hat\lambda_j^{-1}\le \hat\lambda_d^{-1}\le 2\lambda_d^{-1}$ and  $\operatorname{tr} (\hat P_j\Sigma)\le \lambda_1$ in the last inequality. It remains to bound the first term on the right-hand side. We have
\begin{equation}\label{EqVarEq}
\sum_{j\le d}\lambda_j^{-1}\operatorname{tr} (\hat P_j\Sigma)\le \sum_{l\le d'}\lambda_{r_{l}}^{-1}\operatorname{tr} (\hat P_{J_l}\Sigma).
\end{equation}
By \eqref{EqSpecDec} and the fact that orthogonal projectors are idempotent and self-adjoint, we have 
\[
\operatorname{tr} (\hat P_{J_l}\Sigma)=\sum_{k\ge 1}\lambda_k\operatorname{tr} (\hat P_{J_l}P_k)=\sum_{k\ge 1}\lambda_k\|\hat P_{J_l}P_k\|_2^2.
\]
Hence, $\operatorname{tr} (\hat P_{J_1}\Sigma)\le \lambda_1\|\hat P_{J_1}\|_2^2=\lambda_1r_1$ and
\begin{align*}
\operatorname{tr} (\hat P_{J_l}\Sigma)&\le\lambda_{r_{l-1}+1}\|\hat P_{J_l}\|_2^2+\sum_{k\le r_{l-1}}(\lambda_k-\lambda_{r_{l-1}+1})\|\hat P_{J_l}P_k\|_2^2\\
&\le  \lambda_{r_{l-1}+1}(r_l-r_{l-1})+\sum_{k\le r_{l-1}}(\lambda_k-\lambda_{r_{l-1}+1})\|\hat P_{J_l}P_k\|_2^2
\end{align*}
for every $2\le l\le d'$. Inserting this into \eqref{EqVarEq} and using that $r_{d'}\le C_1d$ and $\lambda_{r_{l}}^{-1}\lambda_{r_{l-1}+1}\le C_2$ for every $l\le d'$, the claim follows.
\end{proof}

\subsection{Bounds for the prediction error}
Combining Lemma \ref{LemBVDecCond} with Propositions \ref{ResultBias} and \ref{ResultVar}, we obtain an upper bound for the mean squared prediction error of PCR conditional on the design. This bound has the same leading terms appearing in Corollary \ref{CorOracleRisk} for the oracle PCR estimator. In Section \ref{SecRW17}, we will see how the remainder terms can be bounded using the excess risk bounds from \cite{RW17}. 
\begin{thm} \label{FinalResult} Suppose that $f=\Sigma^sh$ with $s\ge 0$ and $h\in\mathcal{H}$. Grant the assumptions of Proposition \ref{ResultVar}. Then, for each $r\le d$, we have on the event $\{\hat \lambda_d\ge \lambda_d/2\}$,
\begin{align*}
\mathbb{E}\big(\|\EstPCR-f\|_{L^2(\mathbb{P}^X)}^2|X_1,\dots,X_n\big)&\le C\Big(\lambda_{r+1}^{1+2s}\|h\|^2+R_1+\frac{\sigma^2}{n}(d+R_2)\Big)
\end{align*}
with a constant $C>0$ depending only on $s$, $\lambda_1$ and $C_1$, $C_2$ from Proposition~\ref{ResultVar}, and remainder terms $R_1={\mathcal E}^{PCA}_{\le r}(\lambda_{r+1})\|h\|^2$ and 
\[
R_2=\sum_{2\le l\le d'}\lambda_{r_{l}}^{-1}\sum_{k\le r_{l-1}}(\lambda_k-\lambda_{r_{l-1}+1})\|\hat P_{J_l}P_k\|_2^2+\lambda_d^{-1} \sum_{j\le d}\mathbbm{1}(\hat\lambda_j< \lambda_j/2).
\]
\end{thm}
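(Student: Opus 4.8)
The plan is to synthesise the three building blocks already in place—Lemma \ref{LemBVDecCond}, Propositions \ref{ResultBias}/\ref{ResultBiasPre}, and Proposition \ref{ResultVar}—so that the argument is essentially an assembly step rather than a new computation. First I would note that on the event $\{\hat\lambda_d\ge\lambda_d/2\}$ the standing assumption $\lambda_d>0$ forces $\hat\lambda_d\ge\lambda_d/2>0$, so the conditional bias--variance identity of Lemma \ref{LemBVDecCond} applies and gives
\[
\mathbb{E}\big(\|\EstPCR-f\|_{L^2(\mathbb{P}^X)}^2\mid X_1,\dots,X_n\big)=\langle \hat P_{>d}f,\Sigma \hat P_{>d}f\rangle+\frac{\sigma^2}{n}\sum_{j\le d}\hat\lambda_j^{-1}\operatorname{tr}( \hat P_j \Sigma).
\]
It then suffices to bound the two summands separately by the quantities appearing in the statement.

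For the bias term I would branch on the value of $s$. When $s>0$, Proposition \ref{ResultBias} directly yields $\langle \hat P_{>d}f,\Sigma \hat P_{>d}f\rangle\le C(\lambda_{r+1}^{1+2s}+{\mathcal E}^{PCA}_{\le r}(\lambda_{r+1}))\|h\|^2=C(\lambda_{r+1}^{1+2s}\|h\|^2+R_1)$ with a constant depending only on $s$ and $\lambda_1$. When $s=0$ Proposition \ref{ResultBias} is inapplicable, but then $f=\Sigma^0h=h$, so Proposition \ref{ResultBiasPre} delivers the same bound with $\lambda_{r+1}^{1+2s}=\lambda_{r+1}$ and $\|f\|^2=\|h\|^2$. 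This case distinction is precisely what lets the theorem assert $s\ge 0$ while Proposition \ref{ResultBias} assumes $s>0$, and it is the only genuinely non-mechanical point in the proof.

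For the variance term I would invoke Proposition \ref{ResultVar}, whose hypotheses are granted. Multiplying its conclusion by $\sigma^2/n$ gives an upper bound of $\tfrac{\sigma^2}{n}\big(2C_1C_2d+2A+2\lambda_1B\big)$, where $A=\sum_{2\le l\le d'}\lambda_{r_l}^{-1}\sum_{k\le r_{l-1}}(\lambda_k-\lambda_{r_{l-1}+1})\|\hat P_{J_l}P_k\|_2^2$ and $B=\lambda_d^{-1}\sum_{j\le d}\mathbbm{1}(\hat\lambda_j<\lambda_j/2)$. Comparing with the definition of $R_2=A+B$, the entire expression is at most $C\tfrac{\sigma^2}{n}(d+R_2)$ as soon as $C\ge\max(2C_1C_2,2,2\lambda_1)$.

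Adding the two bounds and absorbing all numerical factors together with the dependencies on $s$, $\lambda_1$, $C_1$, $C_2$ into a single constant $C$ yields the claimed inequality. I do not anticipate a real obstacle: the analytic content sits in Lemma \ref{LemBVDecCond} and Propositions \ref{ResultBias} and \ref{ResultVar}, and what remains is the $s=0$ edge case together with the bookkeeping needed to confirm that the final constant depends only on the quantities listed in the statement.
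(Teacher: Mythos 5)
Your proposal is correct and follows exactly the route the paper intends: the theorem is stated as the combination of Lemma \ref{LemBVDecCond} (bias--variance decomposition, valid since $\hat\lambda_d\ge\lambda_d/2>0$), Proposition \ref{ResultBias} for the bias, and Proposition \ref{ResultVar} for the variance, with constants collected at the end. Your explicit handling of the $s=0$ case via Proposition \ref{ResultBiasPre} (using $f=h$ and $\|f\|=\|h\|$) is a point the paper leaves implicit, and it correctly reconciles the theorem's hypothesis $s\ge 0$ with the assumption $s>0$ in Proposition \ref{ResultBias}.
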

While the prediction error is standard in statistical learning, estimates in $\mathcal{H}$-norm are standard for inverse problems. Following a similar but simpler line of arguments, we get the following upper bound in $\mathcal{H}$ norm. We observe that the excess risk has to be replaced by stronger subspace distances.
\begin{proposition} Suppose that $f=\Sigma^sh$ with $s\ge 0$ and $h\in\mathcal{H}$. Then, for every $r\le d$, we have on the event $\{\hat \lambda_d\ge \lambda_d/2\}$, 
\begin{align*}
\mathbb{E}\big(\|\EstPCR-f\|^2|X_1,\dots,X_n\big)&\le 2\lambda_{r+1}^{2s}\| h\|^2+\frac{2\sigma^2}{n}\Big(\sum_{j\le d}\lambda_j^{-1}+R_{1}\Big)+R_2
\end{align*}
with 
\[
R_{1}=\lambda_d^{-1} \sum_{j\le d}\mathbbm{1}(\hat\lambda_j< \lambda_j/2),\quad R_2=2\|\hat P_{\le r}- P_{\le r}\|_\infty^2\|f\|^2.
\]
\end{proposition}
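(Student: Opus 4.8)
The plan is to mirror the three-step structure of the $L^2(\mathbb{P}^X)$ analysis—bias-variance decomposition, variance bound, bias bound—while tracking the crucial structural simplification that in $\mathcal{H}$-norm the weighting by $\Sigma^{1/2}$ disappears. First I would record the $\mathcal{H}$-norm analog of Lemma \ref{LemBVDecCond}. Starting from \eqref{EqDecEstPCR}, we have $\EstPCR-f=-\hat P_{>d}f+n^{-1/2}\sum_{j\le d}\hat\lambda_j^{-1/2}\langle\boldsymbol{\epsilon},\hat v_j\rangle\hat u_j$. Taking the squared $\mathcal{H}$-norm and the conditional expectation, the cross term vanishes since $\mathbb{E}(\langle\boldsymbol{\epsilon},\hat v_j\rangle\mid X_1,\dots,X_n)=0$, and because the $\hat u_j$ are orthonormal while the $\langle\boldsymbol{\epsilon},\hat v_j\rangle$ are conditionally uncorrelated with variance $\sigma^2$, one obtains
\[
\mathbb{E}\big(\|\EstPCR-f\|^2\mid X_1,\dots,X_n\big)=\|\hat P_{>d}f\|^2+\frac{\sigma^2}{n}\sum_{j\le d}\hat\lambda_j^{-1}.
\]
The absence of the $\operatorname{tr}(\hat P_j\Sigma)$ factors in the variance, and of any $\Sigma^{1/2}$ inside the bias, compared with Lemma \ref{LemBVDecCond}, is exactly what forces the stronger subspace distance below.

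For the variance term I would repeat the opening argument of the proof of Proposition \ref{ResultVar}. On the event $\{\hat\lambda_d\ge\lambda_d/2\}$, split $\sum_{j\le d}\hat\lambda_j^{-1}$ according to whether $\hat\lambda_j\ge\lambda_j/2$ holds. On the first set $\hat\lambda_j^{-1}\le 2\lambda_j^{-1}$, and on the second set $\hat\lambda_j^{-1}\le\hat\lambda_d^{-1}\le 2\lambda_d^{-1}$, which together give $\sum_{j\le d}\hat\lambda_j^{-1}\le 2\sum_{j\le d}\lambda_j^{-1}+2\lambda_d^{-1}\sum_{j\le d}\mathbbm{1}(\hat\lambda_j<\lambda_j/2)=2\big(\sum_{j\le d}\lambda_j^{-1}+R_1\big)$. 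Multiplying by $\sigma^2/n$ produces the stated variance contribution $\tfrac{2\sigma^2}{n}\big(\sum_{j\le d}\lambda_j^{-1}+R_1\big)$.

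For the bias I would first use $r\le d$ to pass to $\|\hat P_{>d}f\|^2\le\|\hat P_{>r}f\|^2$, since the range of $\hat P_{>d}$ is contained in that of $\hat P_{>r}$, so $\hat P_{>d}\le\hat P_{>r}$ as projections. Then, using $\hat P_{>r}=I-\hat P_{\le r}$ and $P_{>r}=I-P_{\le r}$, I would write $\hat P_{>r}f=P_{>r}f-(\hat P_{\le r}-P_{\le r})f$ and apply $\|a+b\|^2\le 2\|a\|^2+2\|b\|^2$. The source condition $f=\Sigma^s h$ gives $\|P_{>r}f\|^2=\sum_{k>r}\lambda_k^{2s}\langle u_k,h\rangle^2\le\lambda_{r+1}^{2s}\|h\|^2$, while the remaining term is controlled by the operator norm, $\|(\hat P_{\le r}-P_{\le r})f\|^2\le\|\hat P_{\le r}-P_{\le r}\|_\infty^2\|f\|^2=R_2/2$. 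Hence $\|\hat P_{>d}f\|^2\le 2\lambda_{r+1}^{2s}\|h\|^2+R_2$, and combining with the variance estimate yields the claim.

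I do not expect a genuine obstacle here: the argument is strictly simpler than the $L^2(\mathbb{P}^X)$ case, precisely because no $\Sigma^{1/2}$ weighting has to be absorbed. The one conceptual point worth emphasizing is this structural difference—lacking the eigenvalue weighting, the bias cannot be rewritten through the excess-risk quantities ${\mathcal E}^{PCA}_{\le r}(\lambda_{r+1})$ as in Proposition \ref{ResultBias}; instead one is left with the raw subspace distance $\|\hat P_{\le r}-P_{\le r}\|_\infty^2$. This is exactly the stronger perturbation quantity the paper flags, confirming the remark that $\mathcal{H}$-norm bounds require stronger subspace distances than the excess-risk-based prediction bounds.
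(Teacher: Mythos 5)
Your proof is correct and is exactly the ``similar but simpler line of arguments'' that the paper invokes without writing out: the $\mathcal{H}$-norm bias-variance decomposition (the analogue of Lemma \ref{LemBVDecCond} without the $\Sigma$-weighting), the eigenvalue-splitting bound for $\sum_{j\le d}\hat\lambda_j^{-1}$ taken from the opening of the proof of Proposition \ref{ResultVar}, and the bias split $\hat P_{>r}f=P_{>r}f-(\hat P_{\le r}-P_{\le r})f$ that the paper itself indicates in the remark following Proposition \ref{ResultBiasPre}. There are no gaps, and your closing observation about why the raw subspace distance $\|\hat P_{\le r}-P_{\le r}\|_\infty$ cannot be replaced by excess-risk quantities here matches the paper's own commentary.
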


\section{Bounds for the mean squared prediction error}
\subsection{Bounds for the excess risk}\label{SecRW17}
In this section, we state the main perturbation bounds for the excess risk obtained in \cite{RW17}. From now on, we suppose that $X$ is sub-Gaussian. In order to deal with weaker moment assumptions one can alternatively use the results obtained in \cite{JW18b}, though under stronger eigenvalue assumptions.
\begin{assumption}\label{SubGauss}
Suppose that there is a constant $\SGN$ such that
\[
\forall g\in\mathcal{H},\quad \sup_{k\ge 1}k^{-1/2}\mathbb{E}\big[ |\langle g,X\rangle|^{k} \big] ^{1/k} \le \SGN\mathbb{E}[\langle g,X\rangle^2]^{1/2}.
\]
\end{assumption}
First, \cite[Corollary 2.8]{RW17} implies that 
\[
\mathbb{E}{\mathcal E}^{PCA}_{\le r}(\lambda_{r+1})\le C\sum_{j\le r}\frac{\lambda_j\tr (\Sigma)}{n(\lambda_j-\lambda_{r+1})}
\]
with a constant $C$ depending only on $\SGN$. If for some $\alpha>0$ 
\begin{equation}\label{EqExpDec}
\lambda_j=e^{-\alpha j},\quad j\ge 1,
\end{equation} 
then $ \lambda_j-\lambda_{r+1}\ge (1-e^{-\alpha})\lambda_j$ for every $j\le r$, implying that
$\mathbb{E}{\mathcal E}^{PCA}_{\le r}(\lambda_{r+1})\le Crn^{-1}$ with a constant $C>0$ depending only on $\SGN$ and $\alpha$.
Similarly, if for some $\alpha>1$ 
\begin{equation}\label{EqPolDec}
\lambda_j=j^{-\alpha},\quad j\ge 1,
\end{equation}
then we have $j\lambda_j\ge (r+1)\lambda_{r+1}$ and thus $\lambda_j/(\lambda_j-\lambda_{r+1})\le (r+1)/(r+1-j)$ for every $j\le r$, implying that $\mathbb{E}{\mathcal E}^{PCA}_{\le r}(\lambda_{r+1}) \le Crn^{-1}\log (er)$. While the latter bound is only up to a logarithmic term of larger order than the variance term $\sigma^2dn^{-1}$ (for $r=d$), \cite[Proposition 2.10]{RW17} states the following improvement, valid under additional eigenvalue conditions:
\begin{thm}\label{ThmERBound}  Grant Assumption \ref{SubGauss}. Suppose that 
\begin{equation}\label{ThmERBoundCond}
\frac{\lambda_{r}}{\lambda_{r}-\lambda_{r+1}} \sum_{j\le r}\frac{\lambda_j}{\lambda_j-\lambda_{r+1}} \le cn.
\end{equation}
Then we have
\begin{equation*}
\mathbb{E}{\mathcal E}^{PCA}_{\le r}(\lambda_{r+1})\le C\sum_{j\le r}\frac{\lambda_j\operatorname{tr}_{>r}(\Sigma)}{n(\lambda_j-\lambda_{r+1})}+C\sum_{j\le r}\frac{\lambda_j\operatorname{tr}(\Sigma)}{n(\lambda_j-\lambda_{r+1})}e^{-cn(\lambda_{r}-\lambda_{r+1})^2/\lambda_{r}^2}
\end{equation*}
with $\operatorname{tr}_{>r}(\Sigma)=\sum_{k>r}\lambda_k$ and constants $c,C>0$ depending only on $\SGN$.
\end{thm}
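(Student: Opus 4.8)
Since the statement is verbatim \cite[Proposition 2.10]{RW17}, the cleanest route is simply to invoke it; what follows reconstructs the mechanism. By \eqref{EqERDec} with $\mu=\lambda_{r+1}$ every summand is nonnegative and $\mathcal E^{PCA}_{\le r}(\lambda_{r+1})=\sum_{j\le r}(\lambda_j-\lambda_{r+1})\|\hat P_{>r}u_j\|^2$, so it suffices to control each $\mathbb{E}\|\hat P_{>r}u_j\|^2$, $j\le r$, and reweight. Writing $E=\hat\Sigma-\Sigma$ and using that $\hat\Sigma$ commutes with $\hat P_{>r}$ together with $\hat P_{>r}\hat\Sigma\hat P_{>r}\preceq\hat\lambda_{r+1}\hat P_{>r}$, I would first derive the deterministic per-direction inequality
\[
\lambda_j\|\hat P_{>r}u_j\|^2+\langle Eu_j,\hat P_{>r}u_j\rangle=\langle u_j,\hat\Sigma\hat P_{>r}u_j\rangle\le\hat\lambda_{r+1}\|\hat P_{>r}u_j\|^2,
\]
which, after Cauchy--Schwarz on the middle term, gives $(\lambda_j-\hat\lambda_{r+1})_+\|\hat P_{>r}u_j\|\le\|\hat P_{>r}Eu_j\|$.

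The next step is to linearise on a favourable event. Let $\mathcal A=\{\hat\lambda_{r+1}\le\lambda_{r+1}+\tfrac12(\lambda_r-\lambda_{r+1})\}$, so that $\lambda_j-\hat\lambda_{r+1}\ge\tfrac12(\lambda_j-\lambda_{r+1})$ for all $j\le r$ on $\mathcal A$; under \eqref{ThmERBoundCond}, sub-Gaussian eigenvalue concentration (Assumption~\ref{SubGauss}) yields $\PP(\mathcal A^c)\le\exp(-cn(\lambda_r-\lambda_{r+1})^2/\lambda_r^2)$. On $\mathcal A$ the per-direction inequality gives $\mathcal E^{PCA}_{\le r}(\lambda_{r+1})\mathbbm{1}_{\mathcal A}\le 4\sum_{j\le r}\|\hat P_{>r}Eu_j\|^2/(\lambda_j-\lambda_{r+1})$. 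Splitting $\|\hat P_{>r}Eu_j\|\le\|P_{>r}Eu_j\|+\|(\hat P_{>r}-P_{>r})Eu_j\|$ and treating the first, decoupled, piece directly produces the leading term: since for $j\le r<k$ Assumption~\ref{SubGauss} gives $\mathbb{E}\langle u_k,Eu_j\rangle^2\le Cn^{-1}\lambda_j\lambda_k$, one has $\mathbb{E}\|P_{>r}Eu_j\|^2\le Cn^{-1}\lambda_j\operatorname{tr}_{>r}(\Sigma)$, and summing against the weights $(\lambda_j-\lambda_{r+1})^{-1}$ reproduces the first term of the asserted bound — this is precisely where $\operatorname{tr}_{>r}(\Sigma)$, rather than the full $\operatorname{tr}(\Sigma)$, enters.

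The final and hardest step is to absorb the two discrepancies I just set aside: the cross term $(\hat P_{>r}-P_{>r})Eu_j$, namely the coupling between the random projector and the random perturbation (which, if bounded crudely by $\|\hat P_{>r}-P_{>r}\|_\infty\|Eu_j\|$, would reintroduce the full $\operatorname{tr}(\Sigma)$), and the contribution on $\mathcal A^c$. I would handle both by pairing the low-probability event with a crude moment bound: controlling $\mathbb{E}[(\mathcal E^{PCA}_{\le r}(\lambda_{r+1}))^2]^{1/2}$ by an estimate of the same type as \cite[Corollary~2.8]{RW17}, namely $C\sum_{j\le r}\lambda_j\operatorname{tr}(\Sigma)/(n(\lambda_j-\lambda_{r+1}))$, and applying Cauchy--Schwarz against $\PP(\mathcal A^c)^{1/2}$ delivers exactly the second, exponentially small, term. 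The crux — and the reason the relative-rank hypothesis \eqref{ThmERBoundCond} is indispensable — is establishing that outside an event of probability $e^{-cn(\lambda_r-\lambda_{r+1})^2/\lambda_r^2}$ the empirical spectral projector decouples from $E$ strongly enough that the excess risk is governed by $\operatorname{tr}_{>r}(\Sigma)$; this is the content of the perturbation machinery of \cite{RW17} and is the main obstacle.
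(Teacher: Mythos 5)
The paper gives no proof of Theorem~\ref{ThmERBound} at all: it is quoted verbatim as \cite[Proposition 2.10]{RW17}, so your primary move of simply invoking that result is exactly the paper's approach. Your supplementary reconstruction also follows the RW17 mechanism faithfully in outline (per-direction linearisation, eigenvalue-concentration event, decoupled leading term yielding $\operatorname{tr}_{>r}(\Sigma)$, Cauchy--Schwarz against $\mathbb{P}(\mathcal{A}^c)^{1/2}$ for the bad event), and you correctly flag that the one step you do not carry out---absorbing the cross term $(\hat P_{>r}-P_{>r})Eu_j$ on the good event, which in \cite{RW17} (and in Lemma~\ref{lem1} of this paper's appendix) is done by a self-bounding argument on the additional event $\{\|S_{J^c}\Delta S_{J^c}\|_\infty\le 1/4\}$ rather than by Cauchy--Schwarz---is precisely the substantive content of the cited machinery.
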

If \eqref{EqExpDec} holds, then Theorem \ref{ThmERBound} with $r=d$ gives
\begin{equation}\label{EqERExp}
\forall d\le cn,\quad\mathbb{E}{\mathcal E}^{PCA}_{\le d}(\lambda_{d+1})\le C\frac{d e^{-\alpha d}}{n}.
\end{equation}
Similarly, if \eqref{EqPolDec} holds, then Theorem \ref{ThmERBound} with $r=d$ gives
\begin{align}
\mathbb{E}{\mathcal E}^{PCA}_{\le d}(\lambda_{d+1})&
\le C\frac {d\log (ed)}{n}\big(\operatorname{tr}_{>d}(\Sigma)+\operatorname{tr}(\Sigma)e^{-cn/d^2}\big)\nonumber\\
&\le C\frac {d^{2-\alpha}\log (ed)}{n},\label{EqERPol}
\end{align}
provided that $d^2\log (ed)\le cn$. In both inequalities, $c,C>0$ are constants depending only on $\SGN$ and $\alpha$. 

The first remainder term in Proposition~\ref{ResultVar} has a similar structure as the representation of the excess risk given in \eqref{EqERDec}. Using the techniques in \cite{RW17} leading to Theorem \ref{ThmERBound}, we prove in Appendix \ref{ThmVarBoundProof} the following upper bound: 
\begin{thm}\label{ThmVarBound}
For natural numbers $s>r>0$, let $J=\{r+1,\dots,s\}$.  Suppose that 
\begin{equation}\label{ThmVarBoundCond}
\Big(\max_{k\ge 1}\frac{\lambda_k}{g_k}\Big) \Big(\sum_{k\ge 1}\frac{\lambda_k}{g_k}\Big)\le cn
\end{equation}
with $g_k=\lambda_k-\lambda_{r+1}$ for $k\le r$, $g_k=\min(\lambda_r-\lambda_k,\lambda_k-\lambda_{s+1})$ for $k\in J$, and $g_k=\lambda_{s}-\lambda_{k}$ for $k>s$. Then we have
\begin{equation*}
\mathbb{E}\sum_{k\notin J}g_k\|\hat P_{J}P_k\|_2^2\le C\sum_{k\notin J}\frac{\lambda_k\operatorname{tr}_J(\Sigma)}{ng_k}+R
\end{equation*}
with $\operatorname{tr}_J(\Sigma)=\sum_{j\in J}\lambda_j$ and $R=C\lambda_1(s-r)^2\exp(-cn\min_{k\ge 1}g_k^2/\lambda_k^2)$. 

In particular, if $\lambda_s^{-1}\lambda_{r+1}\leq C_1$, then we have
\begin{equation}\label{ThmExpVarBound}
\mathbb{E}\lambda_{s}^{-1}\sum_{k\notin J}g_k\|\hat P_{J}P_k\|_2^2\le C_1C(s-r)+\lambda_{s}^{-1}R.
\end{equation}
Here, $c,C>0$ are constants depending only on $\SGN$.
\end{thm}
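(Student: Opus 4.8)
The plan is to adapt the perturbation argument of Rei\ss{} and Wahl \cite{RW17} leading to Theorem~\ref{ThmERBound} from the one-sided cut at level $r$ to the two-sided block $J=\{r+1,\dots,s\}$. The starting point is that $\|\hat P_{J}P_k\|_2^2=\sum_{j\in J}\langle \hat u_j,u_k\rangle^2=\operatorname{tr}(\hat P_{J}P_k)$, and that for $k\notin J$ the weight $g_k$ is exactly the distance from $\lambda_k$ to the block interval $[\lambda_s,\lambda_{r+1}]$; in particular $|\lambda_j-\lambda_k|\ge g_k$ for all $j\in J$ and $k\notin J$. Thus $\sum_{k\notin J}g_k\|\hat P_{J}P_k\|_2^2$ is the block analogue of the excess-risk representation in \eqref{EqERDec}, with the single projector $\hat P_{\le r}$ (resp. $\hat P_{>r}$) replaced by $\hat P_J$ and the cut performed on both sides of $J$ at once.

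First I would represent $\hat P_J=(2\pi i)^{-1}\oint_{\gamma}(\zeta I-\hat\Sigma)^{-1}\,d\zeta$ by a contour $\gamma$ enclosing $\hat\lambda_{r+1},\dots,\hat\lambda_s$ but no other empirical eigenvalue, and expand the resolvent of $\hat\Sigma=\Sigma+Z$, $Z=\hat\Sigma-\Sigma$, around that of $\Sigma$. For $k\notin J$ the zeroth-order term vanishes and the first-order term produces, to leading order, $\|\hat P_{J}P_k\|_2^2\approx\sum_{j\in J}\langle u_k,Zu_j\rangle^2/(\lambda_j-\lambda_k)^2$. Taking expectations and using Assumption~\ref{SubGauss}, which gives $\mathbb{E}\langle u_k,Zu_j\rangle^2\le Cn^{-1}\lambda_j\lambda_k$, the leading contribution to $\mathbb{E}\sum_{k\notin J}g_k\|\hat P_{J}P_k\|_2^2$ is at most $C\sum_{k\notin J}\sum_{j\in J}g_k\lambda_j\lambda_k/(n(\lambda_j-\lambda_k)^2)$, and since $|\lambda_j-\lambda_k|\ge g_k$ this collapses to the stated term $C\sum_{k\notin J}\lambda_k\operatorname{tr}_J(\Sigma)/(ng_k)$.

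To make this rigorous I would work on a high-probability event $\Omega_0$ on which a gap-weighted operator norm of $Z$ is at most $1/2$, so that the Neumann expansion of the resolvent converges and the empirical eigenvalues of the block stay separated from the rest by a fixed fraction of the gaps. Condition~\eqref{ThmVarBoundCond} is exactly what is needed for sub-Gaussian operator concentration to yield $\mathbb{P}(\Omega_0^c)\le\exp(-cn\min_{k\ge1}g_k^2/\lambda_k^2)$, with the product $(\max_k\lambda_k/g_k)(\sum_k\lambda_k/g_k)$ playing the role of the effective dimension. On $\Omega_0$ the higher-order terms of the expansion are dominated by the first-order term computed above; off $\Omega_0$ I would bound crudely via $\sum_{k\notin J}\|\hat P_{J}P_k\|_2^2\le\operatorname{tr}(\hat P_{J})=s-r$ and $g_k\le\lambda_1$, which together with the exponentially small probability of $\Omega_0^c$ produces the remainder $R=C\lambda_1(s-r)^2\exp(-cn\min_{k\ge1}g_k^2/\lambda_k^2)$.

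Finally, the ``in particular'' bound \eqref{ThmExpVarBound} follows by multiplying through by $\lambda_s^{-1}$: one uses $\operatorname{tr}_J(\Sigma)\le(s-r)\lambda_{r+1}$ together with $\lambda_{r+1}\lambda_s^{-1}\le C_1$, while $\lambda_1/g_1\ge1$ shows $\max_k\lambda_k/g_k\ge1$, so that \eqref{ThmVarBoundCond} forces $\sum_{k\notin J}\lambda_k/g_k\le cn$ and the leading term reduces to $C_1C(s-r)$. I expect the main obstacle to be the two-sided block perturbation itself: unlike the one-sided cut behind Theorem~\ref{ThmERBound}, the contour must isolate $J$ from both the larger eigenvalues (indices $\le r$) and the smaller ones (indices $>s$) simultaneously, so the deterministic linearization has to reproduce the correct weight $g_k$ uniformly across these two regimes, and the definition $g_k=\min(\lambda_r-\lambda_k,\lambda_k-\lambda_{s+1})$ for $k\in J$ must be threaded consistently through the resolvent estimates and the concentration step.
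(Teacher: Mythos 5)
Your overall architecture matches the paper's proof at the bookkeeping level, and those parts are correct: a good event plus a crude off-event bound $\sum_{k\notin J}g_k\|\hat P_JP_k\|_2^2\le\lambda_1\operatorname{tr}(\hat P_J)=\lambda_1(s-r)$; the moment bound $\mathbb{E}\|P_j\Delta P_k\|_2^2\le C\lambda_j\lambda_k/n$ (with $\Delta=\Sigma-\hat\Sigma$) producing the leading term; and the passage to \eqref{ThmExpVarBound} via $\operatorname{tr}_J(\Sigma)\le(s-r)\lambda_{r+1}$, $\lambda_{r+1}\lambda_s^{-1}\le C_1$ and $\max_k\lambda_k/g_k\ge1$, which is exactly the paper's final step. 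The genuine gap is the sentence ``on $\Omega_0$ the higher-order terms of the expansion are dominated by the first-order term.'' That assertion is the entire content of the theorem, and the tool you propose for it does not work under the stated hypotheses. A Neumann expansion of the Riesz projector requires $\sup_{\zeta\in\gamma}\|\Delta(\zeta-\Sigma)^{-1}\|_\infty<1$, and since the contour $\gamma$ must pass within $(\lambda_r-\lambda_{r+1})/2$ and $(\lambda_s-\lambda_{s+1})/2$ of the spectrum of $\Sigma$, this is an \emph{absolute} gap condition $\|\Delta\|_\infty\lesssim\min(\lambda_r-\lambda_{r+1},\lambda_s-\lambda_{s+1})$. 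Condition \eqref{ThmVarBoundCond} is a \emph{relative} condition and does not imply it: for $\lambda_j=e^{-\alpha j}$ one has $\max_k\lambda_k/g_k\le C_\alpha$ and $\sum_k\lambda_k/g_k\le C_\alpha s$, so \eqref{ThmVarBoundCond} only asks $s\lesssim n$, whereas the absolute condition would force $n\gtrsim e^{2\alpha s}$. Replacing the operator norm by your ``gap-weighted'' norm does not rescue the series as sketched: the intermediate resolvent factors in $[\Delta(\zeta-\Sigma)^{-1}]^m$ act on all coordinates, inside and outside $J$, so a bound on $\|S_{J^c}\Delta S_{J^c}\|_\infty$ (weights on $J^c$ only) does not control the $m$-th order terms, and you give no mechanism---no choice of weights on $J$, no sandwiching scheme---that does. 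Filling this in is essentially redoing relative perturbation theory, which is precisely the hard part.

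The paper sidesteps the expansion entirely, and it is worth seeing how. Lemma \ref{lem1} starts from the \emph{exact} identity $\hat P_j\Delta P_k=(\lambda_k-\hat\lambda_j)\hat P_jP_k$, which on the eigenvalue-separation event gives $\|\hat P_jP_k\|_2\le 2\|\hat P_j\Delta P_k\|_2/|\lambda_j-\lambda_k|$ with no remainder terms at all; the only issue left is that the right-hand side still contains the random projector $\hat P_J$. This is removed by a single self-bounding absorption rather than a series: writing $P_{J^c}=T_{J^c}S_{J^c}$ with $T_{J^c}=\sum_{k\notin J}g_k^{1/2}P_k$, the event $\|S_{J^c}\Delta S_{J^c}\|_\infty\le 1/4$ lets one split $\|\hat P_J\Delta S_{J^c}\|_2^2\le 2\|P_J\Delta S_{J^c}\|_2^2+\|\hat P_JT_{J^c}\|_2^2/8$ and absorb the second term into the left-hand side, giving the deterministic bound $\sum_{k\notin J}g_k\|\hat P_JP_k\|_2^2\le 16\sum_{k\notin J}\|P_J\Delta P_k\|_2^2/g_k$ on the good event. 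The probabilistic input (Lemma \ref{Lem2}) is then only the eigenvalue separation and the weighted-norm bound, both derived from \eqref{ThmVarBoundCond} via the concentration results of Rei\ss{} and Wahl. To turn your proposal into a proof you would either have to develop the weighted resolvent calculus from scratch or switch to this identity-plus-absorption argument.
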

Turning to the setting of Proposition \ref{ResultVar}, suppose that \eqref{ThmVarBoundCond} holds for each pair $(r,s)=(r_{l-1},r_{l})$, $2\le l\le d'$. Then \eqref{ThmExpVarBound} yields that the expectation of the first remainder term in Proposition \ref{ResultVar} is bounded by a constant times $d$ plus an (exponentially) small remainder term. A simple choice is given by $r_j=j$, $j\le d$. In this case, \eqref{ThmVarBoundCond} with $J=\{j\}$ is implied by 
\[
\frac{\lambda_j}{g_j}\Big(\frac{\lambda_j}{g_j}+\sum_{k\neq j}\frac{\lambda_k}{|\lambda_j-\lambda_k|}\Big)\le cn,\qquad g_j=\min(\lambda_{j-1}-\lambda_{j},\lambda_j-\lambda_{j+1}).
\]
If e.g. \eqref{EqExpDec} holds, then Theorem \ref{ThmVarBound} gives
\begin{align}
\quad&\mathbb{E}\sum_{j\le d}\lambda_j^{-1}\sum_{k\leq j-1}(\lambda_k-\lambda_j)\|\hat P_jP_k\|_2^2\leq \mathbb{E}\sum_{j\le d}\lambda_j^{-1}\sum_{k\neq j}|\lambda_j-\lambda_k|\|\hat P_jP_k\|_2^2\le Cd.\label{EqVarRemExp}
\end{align}
for all $d\le cn$. Similarly, if \eqref{EqPolDec} holds, then Theorem \ref{ThmVarBound} gives
\begin{equation}\label{EqVarRemPol}
\mathbb{E}\sum_{j\le d}\lambda_j^{-1}\sum_{k\leq j-1}(\lambda_k-\lambda_j)\|\hat P_jP_k\|_2^2\le Cd+Cd\lambda_1\lambda_d^{-1}e^{-cn/d^2}\le Cd,
\end{equation}
provided that $d^2\log (ed)\le cn$. In both inequalities, $c,C>0$ are constants depending only on $\SGN$ and $\alpha$. 
We conclude this section by stating \cite[Theorem 2.15]{RW17} (applied with $y=1/2$), needed to deal with the second remainder term in Proposition \ref{ResultVar}.
\begin{proposition}\label{PropEVConc} Grant Assumption \ref{SubGauss}. Then there are constants $c_1,c_2>0$ depending only on $\SGN$ such that 
\begin{equation*}
\forall j\le c_1n, \quad\mathbb{P}(\hat\lambda_j< \lambda_j/2)\le e^{1-c_2n}.
\end{equation*}
\end{proposition}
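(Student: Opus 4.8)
Since this proposition is quoted from \cite[Theorem 2.15]{RW17}, I would give a self-contained argument via a \emph{relative} eigenvalue bound that reduces the claim to the concentration of an empirical Gram matrix of sub-Gaussian scores. The plan is first to lower bound $\hat\lambda_j$ by restricting the Courant--Fischer max-min characterisation to the population eigenspace $U_j=\operatorname{span}(u_1,\dots,u_j)$, so that $\hat\lambda_j\ge\min_{v\in U_j,\,\|v\|=1}\langle v,\hat\Sigma v\rangle$. For such a $v=\sum_{k\le j}a_ku_k$ I would whiten the scores: setting $\xi_{i,k}=\lambda_k^{-1/2}\langle u_k,X_i\rangle$ and $b_k=a_k\lambda_k^{1/2}$ gives $\langle v,X_i\rangle=\langle b,\xi_i^{(j)}\rangle$ and hence $\langle v,\hat\Sigma v\rangle=\langle b,\hat{G}_jb\rangle$, where $\xi_i^{(j)}=(\xi_{i,1},\dots,\xi_{i,j})$, $\hat{G}_j=n^{-1}\sum_{i=1}^n\xi_i^{(j)}\otimes\xi_i^{(j)}$ and $\mathbb{E}\hat{G}_j=I_j$. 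Since $\|b\|^2=\sum_{k\le j}a_k^2\lambda_k\ge\lambda_j$, this yields the key relative bound $\hat\lambda_j\ge\lambda_j\,\lambda_{\min}(\hat{G}_j)$, and therefore $\{\hat\lambda_j<\lambda_j/2\}\subseteq\{\lambda_{\min}(\hat{G}_j)<1/2\}\subseteq\{\|\hat{G}_j-I_j\|_\infty>1/2\}$.

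Next I would establish that $\mathbb{P}(\|\hat{G}_j-I_j\|_\infty\ge 1/2)$ is exponentially small. The score vector $\xi^{(j)}$ is isotropic and sub-Gaussian: for a unit vector $w\in\mathbb{R}^j$ one has $\langle w,\xi^{(j)}\rangle=\langle g,X\rangle$ with $g=\sum_{k\le j}w_k\lambda_k^{-1/2}u_k$ satisfying $\mathbb{E}\langle g,X\rangle^2=\|w\|^2=1$, so Assumption \ref{SubGauss} gives sub-Gaussianity with factor $\SGN$. Consequently each $\langle w,\xi_i^{(j)}\rangle^2-1$ is centred and sub-exponential with parameter controlled by $\SGN$. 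Taking a $1/4$-net $\mathcal N$ of the unit sphere of $\mathbb{R}^j$, of cardinality at most $9^j$, using the standard bound $\|\hat{G}_j-I_j\|_\infty\le 2\max_{w\in\mathcal N}|n^{-1}\sum_{i=1}^n(\langle w,\xi_i^{(j)}\rangle^2-1)|$, applying Bernstein's inequality at level $1/4$ to each fixed $w$, and a union bound, I expect $\mathbb{P}(\|\hat{G}_j-I_j\|_\infty\ge 1/2)\le 2\exp(j\log 9-\kappa n)$ with $\kappa>0$ depending only on $\SGN$.

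Finally, choosing $c_1=\kappa/(2\log 9)$ forces $j\log 9\le\kappa n/2$ whenever $j\le c_1n$, so the probability is at most $2e^{-\kappa n/2}\le e^{1-c_2n}$ with $c_2=\kappa/2$; here the harmless factor $2$ from the two-sided Bernstein bound is absorbed into the $e^{1}$. The main obstacle is the uniform (over the sphere) deviation step: the net-plus-Bernstein estimate together with careful bookkeeping to confirm that $\kappa$, and hence $c_1,c_2$, depend only on $\SGN$. It is precisely the relative normalisation $\hat\lambda_j\ge\lambda_j\lambda_{\min}(\hat{G}_j)$ that lets the genuine dimension $j$ (rather than an effective dimension $\operatorname{tr}(\Sigma)/\lambda_j$) govern the exponent, which is what makes the scaling $j\le c_1n$ the natural one.
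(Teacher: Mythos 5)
Your argument is correct, and it is genuinely different from what the paper does: the paper gives no proof at all for this proposition, but simply cites \cite[Theorem 2.15]{RW17} (applied with $y=1/2$), whose proof in that reference runs through the relative perturbation machinery for empirical eigenvalues (the bounds quoted in Appendix \ref{ThmVarBoundProof} as Corollaries 3.12 and 3.14 of \cite{RW17}, which come with relative-rank conditions of the form $\frac{\lambda_j}{\lambda_j-\lambda_{s+1}}\sum_{k\le j}\frac{\lambda_k}{\lambda_k-\lambda_{s+1}}\le cn$). Your route --- Courant--Fischer restricted to the population eigenspace $U_j$, whitening to get $\hat\lambda_j\ge \lambda_j\,\lambda_{\min}(\hat G_j)$, and then a $1/4$-net plus Bernstein bound for the isotropic sub-Gaussian Gram matrix $\hat G_j$ --- is sound at every step: the inclusion $\{\hat\lambda_j<\lambda_j/2\}\subseteq\{\|\hat G_j-I_j\|_\infty>1/2\}$, the identification of $\langle w,\xi^{(j)}\rangle$ with $\langle g,X\rangle$ of unit $L^2(\mathbb{P}^X)$ norm so that Assumption \ref{SubGauss} applies directly, the sub-exponentiality of the centred squares, the cardinality bound $9^j$ with the factor $2$ from the quadratic-form net lemma, and the bookkeeping $c_1=\kappa/(2\log 9)$, $c_2=\kappa/2$ all check out, with all constants depending only on $\SGN$ as required. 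It is worth noting that your reduction is essentially the same mechanism the paper itself invokes in Appendix \ref{PropOracleRiskProof}, where $\hat G_d=\Sigma'^{-1/2}\hat\Sigma'\Sigma'^{-1/2}$ (on $U_d$) is controlled by citing \cite[Theorem 1]{KL14} instead of a hand-rolled net argument; so your proof makes explicit, in elementary form, what both cited results package. What the citation to \cite{RW17} buys the paper is brevity and a statement embedded in a toolkit that also yields the gap-dependent eigenvalue deviations needed elsewhere; what your proof buys is self-containedness and a transparent explanation of why the genuine dimension $j$ (rather than an effective rank $\operatorname{tr}(\Sigma)/\lambda_j$) is what must satisfy $j\le c_1n$. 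Two cosmetic caveats: the whitening tacitly requires $\lambda_j>0$ (implicit in the statement itself, given the paper's convention that all listed $\lambda_j$ are positive), and you only need the lower deviation of $\lambda_{\min}(\hat G_j)$, though the two-sided bound you use is of course sufficient.
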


\subsection{Applications}\label{SecSMA}
Let us apply our results to standard model assumptions on the $\lambda_j$ and $f$. 
\subsubsection{The isotropic case}
Suppose that $\mathcal{H}=\mathbb{R}^p$ and that $\Sigma=I_p$. Then Propositions \ref{ResultBiasPre} and \ref{ResultVar}, in combination with Proposition \ref{PropEVConc} and the identity ${\mathcal E}^{PCA}_{\le d}(\lambda_{d+1})=0$, lead to the (trivial) bound 
\[
\forall d\le c_1n,\quad \mathbb{E}\|\EstTPCR-f\|_{L^2(\mathbb{P}^X)}^2\le \Big(\|f\|^2+\frac{2\sigma^2d}{n}\Big)(1+e^{1-c_2n}).
\]
Note that the bias term cannot be further improved for $d$ not too large. In fact, suppose that $X$ is Gaussian. Then the empirical eigenbasis (considered as an orthogonal matrix, the sign of each column chosen uniformly at random) is distributed according to the Haar measure on the orthogonal group (see e.g. \cite[Theorem 5.3.1]{F85}). In particular each eigenvector is distributed according to the uniform measure $\mu$ on the $(p-1)$-sphere $S^{p-1}$ and we get
\[
\mathbb{E}\langle  \hat P_{>d}f, \Sigma\hat P_{>d}f\rangle=\mathbb{E}\langle\hat P_{>d}f,f\rangle=(p-d)\int_{S^{p-1}}\langle u,f\rangle^2\,d\mu(u)=\frac{p-d}{p}\|f\|^2.
\]

\subsubsection{Exponential decay}
Suppose that \eqref{EqExpDec} holds. Then, applying Theorem \ref{FinalResult}, \eqref{EqERExp}, \eqref{EqVarRemExp}, and Proposition \ref{PropEVConc}, we get
\[
\forall d\le cn,\quad \mathbb{E}\|\EstTPCR-f\|_{L^2(\mathbb{P}^X)}^2\le C\Big(e^{-\alpha d}\|f\|^2+\frac{\sigma^2d}{n}\Big),
\]
where $c,C>0$ are constants depending only on $\SGN$ and $\alpha$. In particular, choosing $d$ of size $\log (n)/\alpha $ gives

\begin{equation*}
\sup_{f\in\mathcal{H}:\|f\|\le 1}\mathbb{E}_f\|\EstTPCR-f\|_{L^2(\mathbb{P}^X)}^2\le C\frac{\log (n)}{n}
\end{equation*}
with a constant $C>0$ depending only on $\SGN$, $\alpha$, and $\sigma^2$. This rate is minimax optimal, see e.g. \cite{CJ10}.

\subsubsection{Polynomial decay}
Suppose that \eqref{EqPolDec} holds and that $f$ satisfies the source condition
\begin{equation*}
f\in H(s,L)=\{g\in\mathcal{H}:g=\Sigma^sh,\|h\|\le L\},\quad s \ge 0,L>0,
\end{equation*}
which is equivalent to assuming that $f$ lies in a Sobolev-type ellipsoid. Then, applying Theorem \ref{FinalResult}, \eqref{EqERPol}, \eqref{EqVarRemPol}, and Proposition \ref{PropEVConc}, we get 
\[
\mathbb{E}\|\EstTPCR-f\|_{L^2(\mathbb{P}^X)}^2\le C\Big(d^{-\alpha-2s\alpha}L^2+\frac{d^{2-\alpha}\log(ed)}{n}L^2+\frac{\sigma^2d}{n}\Big),
\]
provided that $d^2\log(ed)\le cn$, where $c,C>0$ are constants depending only on $\SGN$ and $\alpha$, and $s$. In particular, choosing $d$ of size $n^{1/(2s\alpha+\alpha+1)}$ leads to
\begin{equation*}
\sup_{f\in H(r,L)}\mathbb{E}_f\|\EstTPCR-f\|_{L^2(\mathbb{P}^X)}^2\le Cn^{-(2s\alpha+\alpha)/(2s\alpha+\alpha+1)}
\end{equation*}
with a constant $C>0$ depending only on $\SGN$, $\alpha$, $s$, $L$, and $\sigma^2$. This rate is minimax optimal, see \cite{HH07,CJ10} or \cite{BM18}.

\subsubsection{Convex structures}
\begin{assumption}\label{AssEVConv} Suppose that there is a constant $C_1>0$ such that 
\begin{equation}\label{EqEVConvCond}
\forall j\le d+1,\quad\sum_{k\neq j}\frac{\lambda_k}{|\lambda_j-\lambda_k|}\le C_1j\log(ej),\quad \frac{\lambda_{j-1}}{\lambda_{j-1}-{\lambda_{j}}}\le j.
\end{equation}
\end{assumption}
Assumption \ref{AssEVConv} is satisfied under \eqref{EqPolDec} and \eqref{EqExpDec}. It holds under quite general conditions, see e.g. \cite[Lemma 1]{CMS07} and \cite[Lemma 10.1]{HMV13}. 
\begin{corollary}\label{CorConvEV} Grant Assumptions \ref{SubGauss} and \ref{AssEVConv}. Suppose that $f=\Sigma^sh$ with $s\ge 0$ and $h\in\mathcal{H}$. Then there are constants $c,C>0$ depending only on $\lambda_1$, $s$, $\SGN$, and $C_1$ such that the following holds true. For all $d\ge 1$ such that $d^2\log (ed)\le cn$, we have
\[
\mathbb{E}\|\EstTPCR-f\|_{L^2(\mathbb{P}^X)}^2\le C\Big(\lambda_{d+1}^{1+2s}\|h\|^2+\frac{\sigma^2d}{n}+R\Big)
\]
with
\[
R=\frac{d\log (ed)}{n}\operatorname{tr}_{>d}(\Sigma)\|h\|^2+\Big(\operatorname{tr}(\Sigma)\|h\|^2+\lambda_d^{-1}\frac{\sigma^2d}{n}\Big)e^{-cn/d^2}.
\]
\end{corollary}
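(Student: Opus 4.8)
The plan is to assemble the three results already in hand---Theorem~\ref{FinalResult} (the conditional bias--variance bound), the excess-risk bound of Theorem~\ref{ThmERBound}, and the variance-remainder bound of Theorem~\ref{ThmVarBound}, together with the eigenvalue concentration in Proposition~\ref{PropEVConc}---and to verify that Assumption~\ref{AssEVConv} supplies exactly the spectral inequalities these results require. First I would split the risk over the event $A=\{\hat\lambda_d\ge\lambda_d/2\}$. On $A^c$ the truncated estimator vanishes, so the loss is $\langle f,\Sigma f\rangle=\langle h,\Sigma^{1+2s}h\rangle\le\lambda_1^{1+2s}\|h\|^2$; since $d^2\log(ed)\le cn$ forces $d\le c_1 n$, Proposition~\ref{PropEVConc} bounds $\mathbb{P}(A^c)$ by $e^{1-c_2n}$, and because $n/d^2\le n$ this whole contribution is swallowed by the $\operatorname{tr}(\Sigma)\|h\|^2 e^{-cn/d^2}$ piece of $R$. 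On $A$ the truncated estimator equals $\EstPCR$, so I would invoke Theorem~\ref{FinalResult} with $r=d$ and the finest partition $r_l=l$ (hence $d'=d$ and $C_1=C_2=1$ in Proposition~\ref{ResultVar}), take expectations (the bound there is pointwise on the design-measurable event $A$), and reduce matters to bounding $\mathbb{E}\,{\mathcal E}^{PCA}_{\le d}(\lambda_{d+1})$ and $\mathbb{E}R_2$.

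For the bias remainder, the plan is to read off from Assumption~\ref{AssEVConv} that $\sum_{j\le d}\lambda_j/(\lambda_j-\lambda_{d+1})\le Cd\log(ed)$ (first inequality of \eqref{EqEVConvCond} at $j=d+1$, since $|\lambda_{d+1}-\lambda_k|=\lambda_k-\lambda_{d+1}$ for $k\le d$) and $\lambda_d/(\lambda_d-\lambda_{d+1})\le d+1$ (second inequality at $j=d+1$), so that the hypothesis \eqref{ThmERBoundCond} reduces to a multiple of $d^2\log(ed)\le cn$. Theorem~\ref{ThmERBound} then applies, and using the gap bound $(\lambda_d-\lambda_{d+1})/\lambda_d\ge 1/(d+1)$ to replace the exponent by $-cn/d^2$, together with $d\log(ed)/n\le c$, yields a bound of the required shape for $\mathbb{E}\,{\mathcal E}^{PCA}_{\le d}(\lambda_{d+1})\|h\|^2$.

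For the variance remainder I would apply Theorem~\ref{ThmVarBound} to each summand of $R_2$ with $(r,s)=(l-1,l)$ and $J=\{l\}$. The two inequalities in \eqref{EqEVConvCond} give $\lambda_l/g_l\le l+1$ (from $g_l\ge\lambda_l/(l+1)$) and $\sum_{k\neq l}\lambda_k/|\lambda_l-\lambda_k|\le C_1 l\log(el)$, so the sufficient form of \eqref{ThmVarBoundCond} is again controlled by $d^2\log(ed)\le cn$; since $\lambda_s^{-1}\lambda_{r+1}=1$ and $s-r=1$, \eqref{ThmExpVarBound} gives each term a bound $C+C\lambda_1\lambda_d^{-1}e^{-cn/d^2}$ once I verify $\min_{k\ge1}g_k/\lambda_k\ge 1/(d+1)$. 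Summing over $2\le l\le d$, adding the second part of $R_2$ (bounded by $\lambda_d^{-1}d\,e^{1-c_2n}$ via Proposition~\ref{PropEVConc}), and multiplying by $\sigma^2/n$ produces $C\sigma^2 d/n+C\lambda_d^{-1}(\sigma^2 d/n)e^{-cn/d^2}$. Collecting the leading terms $\lambda_{d+1}^{1+2s}\|h\|^2$ and $\sigma^2 d/n$ and absorbing every exponential into the single rate $e^{-cn/d^2}$ then gives the stated bound, with constants depending only on $\lambda_1,s,\SGN,C_1$.

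The main obstacle is not any single estimate but the verification that the convexity-type Assumption~\ref{AssEVConv}, combined with the regime $d^2\log(ed)\le cn$, \emph{simultaneously} implies the two eigenvalue hypotheses \eqref{ThmERBoundCond} and \eqref{ThmVarBoundCond}. The key quantitative input is the uniform lower bound $\min_k g_k/\lambda_k\gtrsim 1/d$, which must be checked separately for indices below, inside, and above each block $J=\{l\}$; this is precisely what collapses the exponentially small remainders of Theorems~\ref{ThmERBound} and~\ref{ThmVarBound} to the common factor $e^{-cn/d^2}$. Everything else is routine bookkeeping of nonnegative remainder terms.
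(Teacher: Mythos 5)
Your proposal is correct and follows essentially the same route as the paper's own (very terse) proof: apply Theorem~\ref{FinalResult} with $r=d$ and the finest partition $r_l=l$, verify that Assumption~\ref{AssEVConv} together with $d^2\log(ed)\le cn$ yields the hypotheses \eqref{ThmERBoundCond} and \eqref{ThmVarBoundCond} so that Theorems~\ref{ThmERBound} and~\ref{ThmVarBound} apply, and use Proposition~\ref{PropEVConc} for the truncation event and the indicator part of $R_2$. The details you make explicit (the gap bounds $\lambda_j/g_j\le j+1$ and $\min_{k}g_k/\lambda_k\gtrsim 1/d$, and the absorption of all exponentials into $e^{-cn/d^2}$) are precisely what the paper compresses into the remark that ``the first inequalities in \eqref{EqERPol} and \eqref{EqVarRemPol} also hold under Assumption~\ref{AssEVConv}''.
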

\begin{proof}
Corollary \ref{CorConvEV} follows from Theorem \ref{FinalResult} (applied with $r_j=j$, $j\le d$) in combination with the results from Section \ref{SecRW17}. Note that the first inequalities in \eqref{EqERPol} and \eqref{EqVarRemPol} also hold under Assumption \ref{AssEVConv}. 
\end{proof}

\subsubsection{Approximate polynomial decay}
\begin{assumption}\label{ApproxPolDecay}
Suppose that there are constants $\alpha>1$ and $\CEV\ge 1$ such that
\[
\forall j\ge 1,\qquad \CEV^{-1}j^{-\alpha}\le \lambda_j\le \CEV j^{-\alpha}.
\]
\end{assumption}
The following corollary shows that the results for polynomial decay still hold true under Assumption \ref{ApproxPolDecay}. 
\begin{corollary}\label{CorApproxPolDecay} Grant Assumptions \ref{SubGauss} and \ref{ApproxPolDecay}. Suppose that $f=\Sigma^sh$ with $s\ge 0$ and $h\in\mathcal{H}$. Then, there are constants $c,C>0$ depending only on $\CEV$, $\alpha$, $s$, and $\SGN$ such that the following holds true. For all $d\ge 1$ such that $d^2\log(ed)\le cn$,  we have
\[
\mathbb{E}\|\EstTPCR-f\|_{L^2(\mathbb{P}^X)}^2\le C\Big(d^{-\alpha-2s\alpha}\|h\|^2+\frac{d^{2-\alpha}\log(ed)}{n}\|h\|^2+\frac{\sigma^2d}{n}\Big).
\]
\end{corollary}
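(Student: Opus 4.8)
The plan is to derive Corollary \ref{CorApproxPolDecay} from the conditional bias--variance bound of Theorem \ref{FinalResult} by choosing its free parameters---the bias threshold $r$ and the block endpoints $0=r_0<\dots<r_{d'}$---appropriately, and then estimating the two remainder terms $R_1$ and $R_2$ by the excess-risk and variance bounds of Section \ref{SecRW17}. Writing $N(\lambda)=\#\{k:\lambda_k\ge\lambda\}$, Assumption \ref{ApproxPolDecay} is equivalent to the two-sided control $c\lambda^{-1/\alpha}\le N(\lambda)\le C\lambda^{-1/\alpha}$, whence $\lambda_{d+1}\asymp d^{-\alpha}$ and $\operatorname{tr}_{>r}(\Sigma)=\sum_{k>r}\lambda_k\asymp r^{1-\alpha}$ for $r\asymp d$. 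Since the leading bias term $\lambda_{r+1}^{1+2s}\|h\|^2\asymp d^{-\alpha(1+2s)}\|h\|^2$ and the variance term $\sigma^2 d/n$ already have the stated form, the task reduces to showing $\mathbb{E}R_1=\mathbb{E}{\mathcal E}^{PCA}_{\le r}(\lambda_{r+1})\|h\|^2\lesssim (d^{2-\alpha}\log(ed)/n)\|h\|^2$ and $\mathbb{E}R_2\lesssim d$, the latter up to exponentially small terms that will be absorbed using Proposition \ref{PropEVConc}.

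For the variance I would take a geometric grid $r_l\asymp\rho^l$ up to $r_{d'}\asymp d$, with ratio $\rho=\rho(\alpha,\CEV)$ chosen so large that $\lambda_{r_l+1}\le\tfrac12\lambda_{r_{l-1}}$; the two-sided bounds then give $d\le r_{d'}\le C_1 d$ and $\lambda_{r_l}^{-1}\lambda_{r_{l-1}+1}\le C_2$, so that Proposition \ref{ResultVar} applies. For the bias I would set $r=r_{d'-1}\asymp d$ in Proposition \ref{ResultBias}. It then remains to verify the hypotheses \eqref{ThmERBoundCond} and \eqref{ThmVarBoundCond} of Theorems \ref{ThmERBound} and \ref{ThmVarBound} for these choices; granting them, \eqref{ThmExpVarBound} summed over the $d'\asymp\log d$ blocks and telescoping $\sum_l(r_l-r_{l-1})=r_{d'}\lesssim d$ yields $\mathbb{E}R_2\lesssim d$, and Theorem \ref{ThmERBound} yields $\mathbb{E}{\mathcal E}^{PCA}_{\le r}(\lambda_{r+1})\lesssim\tfrac{d\log(ed)}{n}\operatorname{tr}_{>r}(\Sigma)\lesssim\tfrac{d^{2-\alpha}\log(ed)}{n}$, both up to $e^{-cn/d^2}$ remainders that are negligible once $d^2\log(ed)\le cn$.

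The main obstacle is that, unlike in Corollary \ref{CorConvEV}, Assumption \ref{ApproxPolDecay} does \emph{not} imply Assumption \ref{AssEVConv}: it gives no lower bound on individual spectral gaps, and the two-sided bounds even permit eigenvalues to accumulate arbitrarily closely just above a given level. Consequently, for a badly placed index the harmonic-type sum $\sum_{j\le r}\lambda_j/(\lambda_j-\lambda_{r+1})$---which governs both the condition \eqref{ThmERBoundCond} and the excess-risk bound---as well as $\max_k\lambda_k/g_k$ and $\sum_k\lambda_k/g_k$ in \eqref{ThmVarBoundCond}, can be arbitrarily large. This is precisely where the flexibility in the choice of $r$ and of the $r_l$ must be used. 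The delicate step I would carry out is to show, scale by scale over the $\asymp d$ candidate indices in a window $[\beta d,d]$ (with $\beta=\beta(\alpha,\CEV)$ fixed so that $\lambda_{\beta d}\ge 2\lambda_d$), that one can select $r$ (and each $r_l$) at which simultaneously the relative gap satisfies $\lambda_r-\lambda_{r+1}\gtrsim\lambda_r/r$ and $\sum_{j\le r}\lambda_j/(\lambda_j-\lambda_{r+1})\lesssim d\log(ed)$: the two-sided control on $N$ bounds the total drop $\lambda_{\beta d}-\lambda_d\asymp d^{-\alpha}$ across the window, so the gaps cannot all be small, and comparing the above sum across candidates to this telescoping total shows that a positive fraction of the candidates is admissible.

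With such $r$ and $r_l$ fixed, condition \eqref{ThmERBoundCond} reads $\tfrac{\lambda_r}{\lambda_r-\lambda_{r+1}}\sum_{j\le r}\tfrac{\lambda_j}{\lambda_j-\lambda_{r+1}}\lesssim d\cdot d\log(ed)\le cn$ and is met, and the same estimates verify \eqref{ThmVarBoundCond} for every block; thus the bounds of the previous paragraph hold. Finally I would pass from the conditional bound on $\{\hat\lambda_d\ge\lambda_d/2\}$ to the full expectation of $\mathbb{E}\|\EstTPCR-f\|_{L^2(\mathbb{P}^X)}^2$: off this event $\EstTPCR=0$, contributing $\|f\|_{L^2(\mathbb{P}^X)}^2\,\mathbb{P}(\hat\lambda_d<\lambda_d/2)$, while the indicator part of $R_2$ contributes $\lambda_d^{-1}\sum_{j\le d}\mathbb{P}(\hat\lambda_j<\lambda_j/2)$; both are exponentially small by Proposition \ref{PropEVConc}. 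Substituting all estimates into Theorem \ref{FinalResult} and taking expectations gives the asserted bound for all $d$ with $d^2\log(ed)\le cn$.
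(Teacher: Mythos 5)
Your overall architecture is exactly the paper's: plug into Theorem \ref{FinalResult}, bound $R_1$ via Theorem \ref{ThmERBound} and $R_2$ via Theorem \ref{ThmVarBound} after choosing $r$ and the block endpoints $r_l$ inside multiplicative windows, and clean up the exceptional events with Proposition \ref{PropEVConc}. You also correctly isolate the crux: Assumption \ref{ApproxPolDecay} gives no lower bound on individual gaps, so \eqref{ThmERBoundCond} and \eqref{ThmVarBoundCond} can fail at a badly placed index, and the flexibility in $r$, $r_l$ must be exploited. However, the mechanism you propose for the selection step is incorrect. Your claim that ``a positive fraction of the candidates is admissible'' is false under Assumption \ref{ApproxPolDecay}: take eigenvalues that are (nearly) constant along long stretches, e.g.\ $\lambda_j=\lambda^*+(d-j)\delta$ with $\delta=e^{-d}$ on a sub-window, followed by a single drop of order $d^{-\alpha}$. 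This is compatible with the two-sided bounds, yet all but $O(1)$ candidates $r$ have gap $\lambda_r-\lambda_{r+1}\le e^{-d}$, so they violate your condition (i) and their harmonic sums $\sum_{j\le r}\lambda_j/(\lambda_j-\lambda_{r+1})$ are of order $d^2$ (or $+\infty$ in the exactly-constant case). The same example kills any averaging repair: a uniform average of the harmonic sum over the window is $\asymp d^2$ (or infinite), and a gap-weighted average is also $\asymp d^2\gg d\log(ed)$, because the near-touching gaps contribute weight without contributing admissible candidates. So ``the total drop is $\asymp d^{-\alpha}$, hence gaps cannot all be small'' only produces \emph{one} large gap somewhere; it does not co-locate a large gap with a controlled harmonic sum, which is the property actually needed.

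What the paper does instead (Lemma \ref{LemApproxPolDecay1}) is not an averaging argument but a chain/contradiction argument with a \emph{pointwise} selection criterion: it seeks $r$ in the window such that $\lambda_j-\lambda_{r+1}>c_2\,(j^{-\alpha}-(r+1)^{-\alpha})$ for \emph{all} $j\le r$, i.e.\ every gap is comparable to the corresponding gap of exact polynomial decay. If no such $r$ exists, each candidate admits a violating index $j(r)$, and chaining these violations telescopes into $\lambda_{j_M}-\lambda_{j_1}\le c_2(j_M^{-\alpha}-j_1^{-\alpha})$ across the whole window, contradicting the drop forced by the two-sided bounds once the window width exceeds $\CEV^{2/\alpha}$ (this is why $c_1$ must depend on $\CEV$ and $\alpha$). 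The harmonic-sum bound then follows not directly but by reduction to the exact-decay estimate \eqref{EqEVEPD}. Note also that for the variance blocks you need the \emph{two-sided} control \eqref{EqApproxPolDecay2} (sums over $j\le r$ and over $k>r$, since $g_k$ for $k\in J_l$ measures the distance to both endpoints); this is part (b) of the lemma, whose proof (showing $I\cap J\neq\emptyset$ for two families of admissible indices) is strictly harder than part (a) and is not addressed by your sketch. Without a correct replacement for Lemma \ref{LemApproxPolDecay1}, your proof does not go through.
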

In order to prove Corollary \ref{CorApproxPolDecay} we have to use the flexibility in $r$ and $(r_l)$ in Propositions \ref{ResultBias} and \ref{ResultVar}, respectively. A crucial ingredient is the following lemma proved in Appendix \ref{ProofLemApproxPolDecay1}.
\begin{lemma}\label{LemApproxPolDecay1}
(a) There are constants $c_1,C_1>0$ depending only on $\CEV$ and $\alpha$ such that the following holds true. For every $d\ge 1/c$ there is a natural number $r\in[c_1d, d]$ such that 
\[
\sum_{j\le r}\frac{\lambda_j}{\lambda_j-\lambda_{r+1}}\le C_1r\log(er),\qquad \frac{\lambda_r}{\lambda_r-\lambda_{r+1}}\le C_1r.
\]
(b) There are constants $C_1,C_2>0$ depending only on $\CEV$ and $\alpha$ such that the following holds true. For every $d\ge 1$ there is a natural number $r\in[d, C_1d]$ such that 
\begin{equation}\label{EqApproxPolDecay2}
\sum_{j\le r}\frac{\lambda_j}{\lambda_j-\lambda_{r+1}}+\sum_{k> r}\frac{\lambda_k}{\lambda_r-\lambda_{k}}\le C_2r\log(er),\qquad \frac{\lambda_r}{\lambda_r-\lambda_{r+1}}\le C_2r.
\end{equation}
\end{lemma}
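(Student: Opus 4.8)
The plan is to exhibit, in each case, a single index $r$ for which the stated estimates hold, by decoupling the macroscopic and the microscopic behaviour of the spectrum near the scale $\lambda_{d+1}$. Both parts amount to finding $r$ in the prescribed range with (i) a gap bound $\lambda_r-\lambda_{r+1}\ge c\,\lambda_{r+1}/r$, which yields $\lambda_r/(\lambda_r-\lambda_{r+1})\le C r$, and (ii) the sum bound $\sum_{j\le r}\lambda_j/(\lambda_j-\lambda_{r+1})\le C r\log(er)$, with part (b) asking in addition for the mirror-image tail bound $\sum_{k>r}\lambda_k/(\lambda_r-\lambda_k)\le C r\log(er)$. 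It is instructive to recall that for the exact law \eqref{EqPolDec} these are immediate: from $j\lambda_j\ge(r+1)\lambda_{r+1}$ one gets $\lambda_j/(\lambda_j-\lambda_{r+1})\le(r+1)/(r+1-j)$, hence $\sum_{j\le r}\lambda_j/(\lambda_j-\lambda_{r+1})\le(r+1)\sum_{m=1}^{r}m^{-1}\le C r\log(er)$ and $\lambda_r/(\lambda_r-\lambda_{r+1})\le r+1$. The task is to recover this behaviour, for a suitable $r$, from the two-sided bounds of Assumption~\ref{ApproxPolDecay} alone.

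First I would record a separation that holds for every index and needs no choice: for a constant $\tau=\tau(\CEV,\alpha)$ large enough, Assumption~\ref{ApproxPolDecay} gives $\lambda_{\ceil{j/\tau}}\ge 2\lambda_{j+1}$. Consequently every term in $\sum_{j\le r}\lambda_j/(\lambda_j-\lambda_{r+1})$ with $j\le r/\tau$ has $\lambda_j-\lambda_{r+1}\ge\lambda_j/2$ and so contributes at most $2$, for a total of at most $2r$; this reduces (ii) to the ``near block'' $r/\tau<j\le r$, on which $\lambda_j$ is of order $\lambda_{r+1}$. Writing $\frac{1}{\lambda_j-\lambda_{r+1}}=\int_{\lambda_j-\lambda_{r+1}}^{\infty}x^{-2}\,dx$ and using Fubini, the near block is bounded by $C\lambda_{r+1}\int_{0}^{\infty}g(x)\,x^{-2}\,dx$, where $g(x)=\#\{j:\ r/\tau<j\le r,\ \lambda_j-\lambda_{r+1}\le x\}$ vanishes for $x<\lambda_r-\lambda_{r+1}$. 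Thus if the eigenvalues fan out above $\lambda_{r+1}$ in the sense $g(x)\le C x/(\lambda_r-\lambda_{r+1})$, and the gap satisfies $\lambda_r-\lambda_{r+1}\ge c\,\lambda_{r+1}/r$, then this integral is at most $C r\log(er)$, giving (ii) and a fortiori (i). Everything therefore reduces to choosing $r$ so that the eigenvalues immediately above $\lambda_{r+1}$ are sufficiently spread out.

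This is where I would use the flexibility in $r$ together with the counting function $n(s)=\#\{j:\lambda_j>s\}$, which Assumption~\ref{ApproxPolDecay} pins down as $c\,s^{-1/\alpha}\le n(s)\le C\,s^{-1/\alpha}$. The counting bounds produce a dichotomy at the relevant scale. If the spectrum is ``polynomial-like'' over an octave above $\lambda_{r+1}$, the fan-out estimate holds with constants depending only on $\CEV,\alpha$, mimicking the exact computation above. If instead a block of eigenvalues of multiplicity of order $r$ sits at a value $w$ of order $d^{-\alpha}$, the counting bounds force a macroscopic gap just above $w$ (a drop of $n$ by a constant fraction of $r$ can occur only across a value change by a constant factor); in that case I would take $r$ to be the largest index below the block, so that $\lambda_{r+1}=w$, the block is excluded from $\sum_{j\le r}$, and every surviving term obeys $\lambda_j-\lambda_{r+1}\ge c\,\lambda_j$, making the near sum of order $r$. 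To fuse the dichotomy into one clean choice valid for every configuration, I would select $r$ by minimising the near functional over the admissible indices $r\in[c_1 d,d]$ with $\lambda_r>\lambda_{r+1}$ (block endpoints) and estimate the minimum through an average; choosing $c_1=c_1(\CEV,\alpha)$ small enough ensures that the value range $[\lambda_{d+1},\lambda_{c_1 d}]$ strictly exceeds the spread of any single block, so that admissible indices do lie inside the range. For part (b) the extra tail $\sum_{k>r}\lambda_k/(\lambda_r-\lambda_k)$ is the exact reflection of the near block, now for eigenvalues just below $\lambda_r$; I would bound it by the identical argument applied from below (using $\lambda_r\ge 2\lambda_{\ceil{\tau r}}$ for the far indices), run both selections on $[d,C_1 d]$, and intersect the two sets of admissible indices, which remain of positive proportion, to obtain one $r$ good on both sides.

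The main obstacle is precisely this cluster phenomenon. Because Assumption~\ref{ApproxPolDecay} controls $n$ only up to multiplicative constants, it genuinely permits a constant fraction of the eigenvalues near index $d$ to coincide, and permits arbitrarily small gaps adjacent to small blocks; hence no pointwise lower bound on $\lambda_j-\lambda_{r+1}$ is available and the naive reciprocal-gap sum diverges. The crux is the quantitative observation that the same counting bounds which allow large blocks also force such blocks to be bordered above by macroscopic gaps, and then the combinatorial step of selecting one index $r$ that simultaneously tames the single gap $\lambda_r-\lambda_{r+1}$ and the full reciprocal-gap sum over all block sizes at once. The remaining estimates are the routine comparisons with the exact polynomial case sketched in the first paragraph.
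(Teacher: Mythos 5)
Your preparatory reductions (the far/near split at index $r/\tau$, the layer-cake bound for the near block) are sound, and you correctly identify the crux: exhibiting one $r$ in the prescribed range at which both the single gap $\lambda_r-\lambda_{r+1}$ and the full reciprocal-gap sum are tame. But the proposal never actually solves that crux, and the two devices you offer for it fail on spectra that Assumption \ref{ApproxPolDecay} genuinely permits. Your dichotomy (``polynomial-like over an octave'' versus ``one block of multiplicity of order $r$'') is not exhaustive, and the parenthetical claim that a drop of the counting function by a constant fraction of $r$ forces a value change by a constant factor holds only above a $\CEV$-dependent threshold: a cluster at indices $(m,m+k]$ with common value $w$ forces only $\lambda_m/w\ge \CEV^{-2}\bigl((m+k)/m\bigr)^{\alpha}$, which exceeds $1$ only when $(1+k/m)^{\alpha}>\CEV^{2}$. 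Consequently Assumption \ref{ApproxPolDecay} allows cascades of clusters, with total index growth up to a factor of order $\CEV^{2/\alpha}$, separated by \emph{arbitrarily} small gaps; only across a long enough stretch of indices is a macroscopic gap forced. In such a configuration there may be exactly one good boundary $r$ in $[c_1d,d]$, while at every other admissible boundary the preceding cluster sits at an arbitrarily small distance above $\lambda_{r+1}$, so the near functional there is arbitrarily large. This breaks both of your fusion steps: the average of the near functional over admissible indices is then unbounded, so ``estimate the minimum through an average'' cannot work; and for part (b) the sets of indices that are good from above and good from below need not have positive proportion, so ``intersect the two sets, which remain of positive proportion'' fails as well.

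The paper's proof supplies exactly the missing selection mechanism, and it is neither a dichotomy nor an averaging argument. For (a) it fixes constants $c_1,c_2$ depending on $\CEV,\alpha$ and argues by contradiction that some $r\in[c_1d,d]$ satisfies the pointwise comparison $\lambda_j-\lambda_{r+1}>c_2\bigl(j^{-\alpha}-(r+1)^{-\alpha}\bigr)$ for all $j\le r$: if every $r$ in the range fails, the failure witnesses can be chained into a strictly decreasing sequence $d+1=j_1>j_2>\dots>j_M$ with $\lambda_{j_{a+1}}-\lambda_{j_a}\le c_2\bigl(j_{a+1}^{-\alpha}-j_a^{-\alpha}\bigr)$, and telescoping yields $\lambda_{j_M}-\lambda_{d+1}\le c_2\bigl(j_M^{-\alpha}-(d+1)^{-\alpha}\bigr)$, contradicting the two-sided bounds of Assumption \ref{ApproxPolDecay} once $c_1$ is small enough that the range is sufficiently long. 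This chaining is precisely what defeats the cluster cascades: tiny gaps compound until they contradict the macroscopic decay required across $[c_1d,d]$. Given such an $r$, the stated bounds follow by comparison with the exact polynomial case via \eqref{EqEVEPD}; only this last step matches your ``routine comparisons''. For (b) the paper defines the set $I$ of indices good from above and $J$ of indices good from below and proves $I\cap J\neq\emptyset$ not by counting proportions but by showing that $I\cap J=\emptyset$ would let the bad blocks of both kinds cover a long interval, over which the telescoped value drop is again too small. To repair your argument you would need to replace your selection step by a chaining or covering argument of this type; the analytic estimates surrounding it are the easy part.
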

\begin{proof}[Proof of Corollary \ref{CorApproxPolDecay}]
Corollary \ref{CorApproxPolDecay} follows from Theorem \ref{FinalResult} in combination with the results from Section \ref{SecRW17} and Lemma \ref{LemApproxPolDecay1}. Let $c_1,C_1$ be the constants from Lemma \ref{LemApproxPolDecay1}. We may assume that $d\ge 1/c_1$, because the result is clear for $d<1/c_1$. By Lemma \ref{LemApproxPolDecay1}(a) and the assumption on $d$ we can find an $r\in[c_1d,d]$ such that \eqref{ThmERBoundCond} holds and we have
\begin{equation}\label{EqERApproxPolDecay}
\mathbb{E}{\mathcal E}^{PCA}_{\le r}(\lambda_{r+1})\le C\frac{r\log(er)}{n}\big(r^{1-\alpha}+e^{-n/r^2}\big)\le C\frac{d^{2-\alpha}\log(ed)}{n}.
\end{equation}
Hence, for this choice of $r$, we get 
\[
\lambda_{r+1}^{1+2s}\|h\|^2+\mathbb{E}{\mathcal E}^{PCA}_{\le r}(\lambda_{r+1})\|h\|^2\le Cd^{-\alpha-2s\alpha}\|h\|^2+C\frac{d^{2-\alpha}\log(ed)}{n}\|h\|^2
\]
with a constant $C$ depending only on $\CEV$, $\alpha$, $s$, and $\SGN$.
It remains to bound $R_2$ from Theorem \ref{FinalResult}. First, the expectation of the second summand in $R_2$ is exponentially small in $n$ by using Proposition \ref{PropEVConc}. Let us consider the first summand. By Lemma \ref{LemApproxPolDecay1}(b), we can find a sequence $0=r_0<r_1<\dots<r_{d'}$ such that $r_{d'}\ge d$, $r_{l}\in [r_{l-1}+1,C_1(r_{l-1}+1)]$, and \eqref{EqApproxPolDecay2} holds for $r=r_l$, $l=1,\dots,d'$. Using also Assumption \ref{ApproxPolDecay}, we get $\lambda_{r_{l}}^{-1}\lambda_{r_{l-1}+1}\le \CEV^2C_1^\alpha$. Moreover, \eqref{ThmVarBoundCond} is satisfied for each pair $(r,s)=(r_{l-1},r_{l})$, $2\le l\le d'$ and \eqref{ThmExpVarBound} leads to
\begin{align}
&\mathbb{E}\sum_{2 \le l\le d'}\lambda_{r_{l}}^{-1}\sum_{k\leq r_{l-1}}(\lambda_k-\lambda_{r_{l-1}+1})|\|\hat P_{J_l}P_k\|_2^2\nonumber\\
&\leq \mathbb{E}\sum_{2 \le l\le d'}\lambda_{r_{l}}^{-1}\sum_{k\notin J_l}g_k\|\hat P_{J_l}P_k\|_2^2\le Cd+d^{2+\alpha}e^{-cn/d^2}\le Cd\label{EqVarApproxPolDecay}
\end{align}
with a constant $C$ depending only on $\CEV$, $\alpha$, and $\SGN$.
The claim now follows from inserting \eqref{EqERApproxPolDecay} and \eqref{EqVarApproxPolDecay} into Theorem \ref{FinalResult}, in combination with Proposition \ref{PropEVConc} and the definition of $\EstTPCR$.
\end{proof}

\section*{Acknowledgements}
The research has been partially funded by Deutsche Forschungsgemeinschaft (DFG) through grant CRC 1294 ``Data Assimilation'', Project (A4) ``Nonlinear statistical inverse problems with random observations''.

\bibliographystyle{plain}
\bibliography{lit}

\begin{thebibliography}{10}

\bibitem{BBZ07}
G.~Blanchard, O.~Bousquet, and L.~Zwald.
\newblock Statistical properties of kernel principal component analysis.
\newblock {\em Mach. Learn.}, 66:259--294, 2007.

\bibitem{BM18}
G.~Blanchard and N.~M\"{u}cke.
\newblock Optimal rates for regularization of statistical inverse learning
  problems.
\newblock {\em Found. Comput. Math.}, 18:971--1013, 2018.

\bibitem{BMR16}
E.~Brunel, A.~Mas, and A.~Roche.
\newblock Non-asymptotic adaptive prediction in functional linear models.
\newblock {\em J. Multivariate Anal.}, 143:208--232, 2016.

\bibitem{CJ10}
H.~Cardot and J.~Johannes.
\newblock Thresholding projection estimators in functional linear models.
\newblock {\em J. Multivariate Anal.}, 101:395--408, 2010.

\bibitem{CMS07}
H.~Cardot, A.~Mas, and P.~Sarda.
\newblock C{LT} in functional linear regression models.
\newblock {\em Probab. Theory Related Fields}, 138:325--361, 2007.

\bibitem{F85}
R.~H. Farrell.
\newblock {\em Multivariate calculation}.
\newblock Springer-Verlag, New York, 1985.

\bibitem{GKKW02}
L.~Gy\"{o}rfi, M.~Kohler, A.~Krzy\.{z}ak, and H.~Walk.
\newblock {\em A distribution-free theory of nonparametric regression}.
\newblock Springer-Verlag, New York, 2002.

\bibitem{HH07}
P.~Hall and J.~L. Horowitz.
\newblock Methodology and convergence rates for functional linear regression.
\newblock {\em Ann. Statist.}, 35:70--91, 2007.

\bibitem{HMV13}
N.~Hilgert, A.~Mas, and N.~Verzelen.
\newblock Minimax adaptive tests for the functional linear model.
\newblock {\em Ann. Statist.}, 41:838--869, 2013.

\bibitem{HK12}
L.~Horv\'ath and P.~Kokoszka.
\newblock {\em Inference for functional data with applications}.
\newblock Springer, New York, 2012.

\bibitem{HE15}
T.~Hsing and R.~Eubank.
\newblock {\em Theoretical foundations of functional data analysis, with an
  introduction to linear operators}.
\newblock John Wiley \& Sons, Ltd., Chichester, 2015.

\bibitem{JW18b}
M.~Jirak and M.~Wahl.
\newblock Perturbation bounds for eigenspaces under a relative rank condition.
\newblock Available at https://arxiv.org/pdf/1803.03868, 2018.

\bibitem{J02}
I.~T. Jolliffe.
\newblock {\em Principal component analysis}.
\newblock Springer-Verlag, New York, second edition, 2002.

\bibitem{KL14}
V.~Koltchinskii and K.~Lounici.
\newblock Concentration inequalities and moment bounds for sample covariance
  operators.
\newblock {\em Bernoulli}, 23:110--133, 2017.

\bibitem{RW17}
M.~Rei{\ss} and M.~Wahl.
\newblock Non-asymptotic upper bounds for the reconstruction error of {PCA}.
\newblock Ann. Statist., to appear.

\bibitem{VRC05}
E.~De Vito, L.~Rosasco, A.~Caponnetto, U.~De Giovannini, and F.~Odone.
\newblock Learning from examples as an inverse problem.
\newblock {\em J. Mach. Learn. Res.}, 6:883--904, 2005.

\end{thebibliography}

\appendix
\section{Additional proofs}
\subsection{Proof of Proposition \ref{PropOracleRisk}}\label{PropOracleRiskProof}
We abbreviate $\EstTORA$ and $\EstORA$ by $\tilde f_d$ and $\hat f_d$, respectively. Consider $X'=P_{\le d}X$, $X_i'=P_{\le d}X_i$ (defined on $U_d$) which lead to covariance and sample covariance $\Sigma'=P_{\le d}\Sigma P_{\le d}$, $\hat\Sigma'=P_{\le d}\hat\Sigma P_{\le d}$. In the following, we use the notation introduced in Sections \ref{SecPCA}, \ref{SecPCR} with an additional superscript $'$. With this notation, we have 
\begin{equation}\label{EqDecOracleEst}
\hat f_d=n^{-1/2}\sum_{j\le d}\hat\lambda_j'^{-1/2}\langle \mathbf{Y},\hat{v}_j'\rangle \hat u_j',
\end{equation}
provided that $\hat\lambda_d'>0$. 
We define the events $\mathcal{A}_d=\{\hat\lambda_d'\ge \lambda_d/2\}$ and
\[
\mathcal{E}_d=\{(1/2)\|g\|_{L^2(\mathbb{P}^X)}^2\le \|g\|_n^2\le (3/2)\|g\|_{L^2(\mathbb{P}^X)}^2\,\forall g\in U_d\}.
\] 
Inserting $\|g\|_n^2=\langle g,\hat\Sigma' g\rangle $, $\|g\|_{L^2(\mathbb{P}^X)}^2=\langle g,\Sigma' g\rangle$, $g\in U_d$, and applying \cite[Theorem~1]{KL14}, there are constants $c_1,c_2>0$ depending only on $\SGN$ such that 
\[
\mathbb{P}(\mathcal{E}_d^c)=\mathbb{P}(\|\Sigma'^{-1/2}(\hat\Sigma'-\Sigma')\Sigma'^{-1/2}\|_\infty >1/2)\le e^{-c_1n},
\]
provided that $d\le c_2n$. We now decompose the mean squared prediction error as follows:
\begin{align*}
\mathbb{E}\|\tilde f_d-f\|_{L^2(\mathbb{P}^X)}^2&\le \mathbb{E}\mathbbm{1}_{\mathcal{A}_d\cap \mathcal{E}_d}\|\hat f_d-P_{\le d}f\|_{L^2(\mathbb{P}^X)}^2+\mathbb{E}\mathbbm{1}_{\mathcal{A}_d\cap \mathcal{E}_d^c}\|\hat f_d-P_{\le d}f\|_{L^2(\mathbb{P}^X)}^2\\
&+\mathbb{E}\mathbbm{1}_{\mathcal{A}_d^c}\|P_{\le d}f\|_{L^2(\mathbb{P}^X)}^2+\|P_{> d}f\|_{L^2(\mathbb{P}^X)}^2=:I_1+\dots +I_4.
\end{align*}
The term $I_4$ is exactly the oracle bias term. By Proposition \ref{PropEVConc}, we have
\[
I_3\le \|f\|_{L^2(\mathbb{P}^X)}^2e^{1-c_1n},
\]
provided that $d\le c_2n$. Moreover, 
\begin{align*}
I_2&\le 2\mathbb{P}(\mathcal{E}_d^c)\|f\|_{L^2(\mathbb{P}^X)}^2+2\mathbb{E}\mathbbm{1}_{\mathcal{A}_d\cap \mathcal{E}_d^c}\|\hat f_d\|_{L^2(\mathbb{P}^X)}^2.
\end{align*}
Inserting that on $\mathcal{A}_d$,
\[
\|\hat f_d\|_{L^2(\mathbb{P}^X)}^2\le \lambda_{1}\|\hat f_d\|^2\le 2\lambda_{1}\lambda_{d}^{-1}\|\mathbf{Y}\|_n^2\le 4\lambda_{1}\lambda_{d}^{-1}(\|f\|_n^2+\|\boldsymbol{\epsilon}\|_n^2),
\]
we get
\begin{align*}
I_2\le 2\mathbb{P}(\mathcal{E}_d^c)\|f\|_{L^2(\mathbb{P}^X)}^2+8\lambda_{1}\lambda_{d}^{-1}\mathbb{P}^{1/2}(\mathcal{E}_d^c)\mathbb{E}^{1/2}\|f\|_n^4+8\lambda_{1}\lambda_{d}^{-1}\mathbb{P}(\mathcal{E}_d^c)\sigma^2.
\end{align*}
By the Cauchy-Schwarz inequality and Assumption \ref{SubGauss}, we get
\[
\mathbb{E}^{1/2}\|f\|_n^4\le \mathbb{E}^{1/2}\langle f,X\rangle^4\le 4\SGN^2\|f\|_{L^2(\mathbb{P}^X)}^2
\]
and thus
\[
I_2\le (32\SGN^2+10)\lambda_{1}\lambda_{d}^{-1}(\sigma^2+\|f\|_{L^2(\mathbb{P}^X)}^2)e^{-c_1n}.
\]
Finally, 
\begin{align*}
(1/2)I_1\le \mathbb{E}\mathbbm{1}_{\mathcal{A}_d\cap \mathcal{E}_d}\|\hat f_d-P_{\le d}f\|_n^2.
\end{align*}
Letting $\hat\Pi_n:\mathbb{R}^n\rightarrow S_nU_d,\mathbf{y}\mapsto \argmin_{g\in  U_d}\|\mathbf{y}-S_ng\|_n^2$ be the orthogonal projection onto $S_nU_d$, we have 
\begin{align*}
(1/2)I_1&\le \mathbb{E}\|\hat\Pi_n f-P_{\le d}f\|_n^2+\mathbb{E}\mathbb{E}(\|\hat\Pi_n\boldsymbol{\epsilon}\|_n^2|X_1,\dots,X_n)\\
&\le \mathbb{E}\| f-P_{\le d}f\|_n^2+\sigma^2dn^{-1}=\| P_{> d}f\|_{L^2(\mathbb{P}^X)}^2+\sigma^2dn^{-1},
\end{align*}
where we applied the projection theorem in the second inequality. The claim follows from collecting all these inequalities. \qed
\subsection{Proof of Theorem \ref{ThmVarBound}}\label{ThmVarBoundProof}
The following lemma is an extension of \cite[Proposition 3.5]{RW17}:
\begin{lemma}\label{lem1} Let $\Delta=\Sigma-\hat\Sigma$, $J=\{r+1,\dots,s\}$, $g_k=\min_{j\in J}|\lambda_j-\lambda_k|$ for $k\notin J$, $\hat P_{J}=\sum_{j\in J}\hat P_j$, and 
\[
S_{J^c}=\sum_{k\notin J}g_k^{-1/2}P_k.
\]
Then, on the event  $\bigcap_{j\in J,k\notin J}\{|\hat\lambda_j-\lambda_k|\ge |\lambda_j-\lambda_k|/2\}\cap \{\|S_{J^c}\Delta S_{J^c}\|_\infty\le 1/4\}$, we have
\[
\sum_{k\notin J}g_k\|\hat P_{J}P_k\|_2^2\le 16\sum_{k\notin J}\frac{\|P_J\Delta P_k\|_2^2}{g_k}.
\]
\end{lemma}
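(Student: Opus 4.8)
The plan is to combine the standard first-order perturbation identity for spectral projections with the eigenvalue-separation event, and then to close the estimate by an \emph{absorption} argument based on the operator-norm bound $\|S_{J^c}\Delta S_{J^c}\|_\infty\le 1/4$.

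First, for $j\in J$ and $k\notin J$ I would use $\hat P_j\hat\Sigma=\hat\lambda_j\hat P_j$ and $\Sigma P_k=\lambda_k P_k$ to get
\[
\hat P_j\Delta P_k=\hat P_j\Sigma P_k-\hat P_j\hat\Sigma P_k=(\lambda_k-\hat\lambda_j)\hat P_j P_k,
\]
hence $\hat P_j P_k=-(\hat\lambda_j-\lambda_k)^{-1}\hat P_j\Delta P_k$, which is legitimate on the event because there $|\hat\lambda_j-\lambda_k|\ge|\lambda_j-\lambda_k|/2\ge g_k/2>0$. Writing $P_J=\sum_{j\in J}P_j$ and inserting $I=P_J+(I-P_J)$ just to the left of $\Delta$ splits $\hat P_j\Delta P_k$ into $\hat P_jP_J\Delta P_k$ and $\hat P_j(I-P_J)\Delta P_k$. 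Summing over $j\in J$, using orthogonality of the $\hat P_j$, the denominator bound $|\hat\lambda_j-\lambda_k|^{-1}\le 2/g_k$, and $\|\hat P_JA\|_2\le\|A\|_2$, I obtain
\[
\|\hat P_JP_k\|_2\le\frac{2}{g_k}\Big(\|P_J\Delta P_k\|_2+\|\hat P_J(I-P_J)\Delta P_k\|_2\Big).
\]
Setting $L:=\sum_{k\notin J}g_k\|\hat P_JP_k\|_2^2$ and using $(a+b)^2\le2a^2+2b^2$ then gives
\[
L\le 8\sum_{k\notin J}\frac{\|P_J\Delta P_k\|_2^2}{g_k}+8\sum_{k\notin J}\frac{\|\hat P_J(I-P_J)\Delta P_k\|_2^2}{g_k}.
\]

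The first sum is already the desired right-hand side, so everything hinges on showing that the second sum is a fixed fraction of $L$. The key observation is that it equals a single Hilbert--Schmidt norm. Introducing $T:=\sum_{k\notin J}g_k^{1/2}P_k$, one checks $TS_{J^c}=I-P_J$, $\|\hat P_JT\|_2^2=L$, and, via a trace computation using $S_{J^c}^2=\sum_{k\notin J}g_k^{-1}P_k$,
\[
\sum_{k\notin J}\frac{\|\hat P_J(I-P_J)\Delta P_k\|_2^2}{g_k}=\|\hat P_J(I-P_J)\Delta S_{J^c}\|_2^2.
\]
Factoring $I-P_J=TS_{J^c}$ and applying $\|XY\|_2\le\|X\|_2\|Y\|_\infty$ with $X=\hat P_JT$, $Y=S_{J^c}\Delta S_{J^c}$ bounds the last expression by $\|\hat P_JT\|_2^2\,\|S_{J^c}\Delta S_{J^c}\|_\infty^2\le L/16$ on the event.

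Plugging this into the previous display yields $L\le 8\sum_{k\notin J}g_k^{-1}\|P_J\Delta P_k\|_2^2+L/2$; absorbing the term $L/2$ (which is finite since $\hat P_JT$ has finite rank) into the left-hand side gives exactly $L\le 16\sum_{k\notin J}g_k^{-1}\|P_J\Delta P_k\|_2^2$. I expect the main obstacle to be this middle step: recognising that the error term has the same self-similar structure as $L$ and rewriting it as $\|\hat P_J(I-P_J)\Delta S_{J^c}\|_2^2$, since it is precisely the factorisation through $S_{J^c}\Delta S_{J^c}$ that lets the operator-norm hypothesis enter with the correct power and with a constant small enough ($(1/4)^2\cdot 8=1/2<1$) to permit absorption. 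The eigenvalue-separation event plays only the auxiliary role of controlling the denominators uniformly in $j$ and $k$.
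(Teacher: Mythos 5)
Your proof is correct and follows essentially the same route as the paper: the perturbation identity $\hat P_j\Delta P_k=(\lambda_k-\hat\lambda_j)\hat P_jP_k$ combined with the eigenvalue-separation event (the paper cites this as Equation (3.1) of Rei\ss{} and Wahl), the introduction of the weighting operator $T=\sum_{k\notin J}g_k^{1/2}P_k$ (called $T_{J^c}$ in the paper) with $TS_{J^c}=I-P_J$, the split of $\Delta$ into $P_J\Delta$ and $(I-P_J)\Delta$, and the absorption of the self-similar term via $\|S_{J^c}\Delta S_{J^c}\|_\infty\le 1/4$. The only cosmetic difference is that you split before summing over $k$ while the paper first assembles $\|\hat P_JT_{J^c}\|_2^2\le 4\|\hat P_J\Delta S_{J^c}\|_2^2$ and then splits; the constants come out identically, and your explicit remark that $L<\infty$ (needed to justify the absorption) is a point the paper leaves implicit.
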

\begin{proof}
On the event $\bigcap_{j\in J,k\notin J}\{|\hat\lambda_j-\lambda_k|\ge |\lambda_j-\lambda_k|/2\}$ it follows from \cite[Equation (3.1)]{RW17} and the definition of $g_k$ that
\begin{equation}\label{EqLem6Gen}
\forall j\in J,k\notin J,\quad \|\hat P_jP_k\|_2^2\le \frac{4\|\hat P_j \Delta P_k\|_2^2}{(\lambda_j-\lambda_k)^2}\leq  \frac{4\|\hat P_j \Delta P_k\|_2^2}{g_k^2}
\end{equation}
and thus
\begin{equation}\label{EqErrorDec}
\sum_{k\notin J}g_k\|\hat P_JP_k\|_2^2\le 4\sum_{k\notin J}\frac{\|\hat P_J \Delta P_k\|_2^2}{g_k}.
\end{equation}
We introduce the operators
\[
P_{J^c}=\sum_{k\notin J}P_k\quad\text{and}\quad T_{J^c}=\sum_{k\notin J}g_k^{1/2}P_k,
\]
which satisfy the identities $T_{J^c}S_{J^c}=P_{J^c}$ and
\[
\|\hat P_J T_{J^c}\|_2^2=\sum_{k\notin J}g_k\|\hat P_JP_k\|_2^2.
\]
Hence, \eqref{EqErrorDec} can be written as
\begin{equation}\label{EqRec1}
\|\hat P_JT_{J^c}\|_2^2\le 4\|\hat P_J \Delta S_{J^c}\|_2^2.
\end{equation}
On the event $\{\|S_{J^c}\Delta S_{J^c}\|_\infty\le 1/4\}$, the last term is bounded via
\begin{align}
\|\hat P_J ES_{J^c}\|_2^2
&\le 2\|\hat P_J P_J \Delta S_{J^c}\|_2^2+2\|\hat P_J P_{J^c} \Delta S_{J^c}\|_2^2\nonumber\\
&=2\|\hat P_J P_J \Delta S_{J^c}\|_2^2+2\|\hat P_J T_{J^c}S_{J^c} \Delta S_{J^c}\|_2^2\nonumber\\
&\le 2\| P_J \Delta S_{J^c}\|_2^2+2\|\hat P_J T_{J^c}\|_2^2\|S_{J^c} \Delta S_{J^c}\|_\infty^2\nonumber\\
&\le 2\| P_J \Delta S_{J^c}\|_2^2+\|\hat P_J T_{J^c}\|_2^2/8\label{EqRec2}.
\end{align}
Inserting \eqref{EqRec2} into \eqref{EqRec1}, we get
\[
\|\hat P_JT_{J^c}\|_2^2\le 8\| P_J ES_{J^c}\|_2^2+\|\hat P_JT_{J^c}\|_2^2/2,
\]
and the claim follows.
\end{proof}
\begin{lemma}\label{Lem2} Grant Assumption \ref{SubGauss}. Let $\mathcal{E}=\bigcap_{j\in J,k\notin J}\{|\hat\lambda_j-\lambda_k|\ge |\lambda_j-\lambda_k|/2\}\cap \{\|S_{J^c}\Delta S_{J^c}\|_\infty\le 1/4\}$. If \eqref{ThmVarBoundCond} holds, then we have 
\[
\mathbb{P}(\mathcal{E}^c)\le (2(s-r)+1)\exp\Big(-cn\min\Big(\frac{(\lambda_r-\lambda_{r+1})^2}{\lambda_r^2},\frac{(\lambda_s-\lambda_{s+1})^2}{\lambda_s^2}\Big)\Big)
\]
with a constant $c>0$ depending only on $\SGN$.
\end{lemma}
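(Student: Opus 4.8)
The plan is to bound $\mathbb{P}(\mathcal{E}^c)$ by a union bound that peels off the eigenvalue-separation events and the single operator-norm event, then to estimate each piece separately. First I would reduce the eigenvalue part: for a fixed $j\in J$ the whole family $\{|\hat\lambda_j-\lambda_k|\ge|\lambda_j-\lambda_k|/2\}_{k\notin J}$ is implied by the single event $\{|\hat\lambda_j-\lambda_j|\le\tfrac12 g_j\}$, where $g_j=\min_{k\notin J}|\lambda_j-\lambda_k|=\min(\lambda_r-\lambda_j,\lambda_j-\lambda_{s+1})$, because on that event $|\hat\lambda_j-\lambda_k|\ge|\lambda_j-\lambda_k|-|\hat\lambda_j-\lambda_j|\ge|\lambda_j-\lambda_k|/2$ for every $k\notin J$. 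Hence
\[
\mathcal{E}^c\subseteq\Big(\bigcup_{j\in J}\{|\hat\lambda_j-\lambda_j|>\tfrac12 g_j\}\Big)\cup\{\|S_{J^c}\Delta S_{J^c}\|_\infty>1/4\}.
\]
Splitting each two-sided eigenvalue event into its two one-sided halves, I am left with $2(s-r)$ eigenvalue events and one operator-norm event; I will bound each of them by $\exp(-cn\,\delta)$ with $\delta=\min\big((\lambda_r-\lambda_{r+1})^2/\lambda_r^2,(\lambda_s-\lambda_{s+1})^2/\lambda_s^2\big)$, which immediately produces the prefactor $2(s-r)+1$.

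For the eigenvalue halves I would invoke the two-sided relative concentration bound for empirical eigenvalues of a sub-Gaussian covariance operator that underlies Proposition \ref{PropEVConc} (namely \cite[Theorem 2.15]{RW17}): under \eqref{ThmVarBoundCond}, which controls the relevant effective dimension, one has $\mathbb{P}(\hat\lambda_j>(1+x)\lambda_j)\le\exp(-cnx^2)$ and $\mathbb{P}(\hat\lambda_j<(1-x)\lambda_j)\le\exp(-cnx^2)$ with $c$ depending only on $\SGN$. Taking $x=g_j/(2\lambda_j)\le 1/2$ gives $\exp(-cng_j^2/\lambda_j^2)$ for each half. The crucial elementary step is then to show $g_j/\lambda_j\ge\min\big((\lambda_r-\lambda_{r+1})/\lambda_r,(\lambda_s-\lambda_{s+1})/\lambda_s\big)$ for every $j\in J$. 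This follows from two monotonicity observations: since $\lambda_j\le\lambda_{r+1}$, the upper gap obeys $(\lambda_r-\lambda_j)/\lambda_j=\lambda_r/\lambda_j-1\ge\lambda_r/\lambda_{r+1}-1\ge(\lambda_r-\lambda_{r+1})/\lambda_r$; and since $\lambda_j\ge\lambda_s$, the lower gap obeys $(\lambda_j-\lambda_{s+1})/\lambda_j=1-\lambda_{s+1}/\lambda_j\ge 1-\lambda_{s+1}/\lambda_s=(\lambda_s-\lambda_{s+1})/\lambda_s$. Squaring shows $g_j^2/\lambda_j^2\ge\delta$, so each of the $2(s-r)$ eigenvalue halves is bounded by $\exp(-cn\,\delta)$.

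For the operator-norm event I would rewrite $S_{J^c}\Delta S_{J^c}=\Sigma_Z-\hat\Sigma_Z$, where $Z=S_{J^c}X$ has covariance $\Sigma_Z=\sum_{k\notin J}(\lambda_k/g_k)P_k$ and empirical covariance $\hat\Sigma_Z=n^{-1}\sum_i Z_i\otimes Z_i$; since $\langle g,Z\rangle=\langle S_{J^c}g,X\rangle$ and $S_{J^c}$ is self-adjoint, $Z$ is again sub-Gaussian with the same factor $\SGN$. Here $\|\Sigma_Z\|_\infty=\max_{k\notin J}\lambda_k/g_k$ and $\operatorname{tr}(\Sigma_Z)=\sum_{k\notin J}\lambda_k/g_k$, so \eqref{ThmVarBoundCond} states precisely that $\|\Sigma_Z\|_\infty\operatorname{tr}(\Sigma_Z)\le cn$ (the full sum in \eqref{ThmVarBoundCond} only adds the $k\in J$ terms and is therefore more than enough). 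Applying the Koltchinskii--Lounici deviation bound for sample covariance operators (\cite{KL14}, as already used in Appendix \ref{PropOracleRiskProof}) and choosing the constant $c$ in \eqref{ThmVarBoundCond} small enough yields $\mathbb{P}(\|\Sigma_Z-\hat\Sigma_Z\|_\infty>1/4)\le\exp(-cn)$. Because $(\lambda_r-\lambda_{r+1})/\lambda_r\le 1$ and $(\lambda_s-\lambda_{s+1})/\lambda_s\le 1$ force $\delta\le 1$, we have $\exp(-cn)\le\exp(-cn\,\delta)$, so this term contributes the additive $+1$.

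Summing the $2(s-r)$ eigenvalue halves and the single operator-norm term gives $\mathbb{P}(\mathcal{E}^c)\le(2(s-r)+1)\exp(-cn\,\delta)$, which is the claim. I expect the main obstacle to be the operator-norm step: one must check that the product form of \eqref{ThmVarBoundCond} is exactly the operator-norm-times-trace (effective-rank) quantity governing the weighted empirical covariance $\hat\Sigma_Z$, and, just as importantly, that the relative eigenvalue bound invoked in the second step is valid under this same hypothesis, so that no extra assumption is needed. By comparison, the monotonicity comparison of $g_j/\lambda_j$ with the two boundary gaps is elementary but is precisely what collapses all $|J|$ distinct eigenvalue rates into the single two-sided exponent $\delta$ appearing in the statement.
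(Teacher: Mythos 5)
Your overall architecture is the same as the paper's (Appendix A.2): a union bound over $2(s-r)$ one-sided eigenvalue-deviation events plus the single operator-norm event, followed by a monotonicity argument in $j$ to collapse all exponents to $\delta=\min\big((\lambda_r-\lambda_{r+1})^2/\lambda_r^2,(\lambda_s-\lambda_{s+1})^2/\lambda_s^2\big)$. However, your operator-norm step contains a genuine error: no choice of the constant $c$ in \eqref{ThmVarBoundCond} yields $\mathbb{P}(\|S_{J^c}\Delta S_{J^c}\|_\infty>1/4)\le e^{-cn}$. Condition \eqref{ThmVarBoundCond} only forces $\|\Sigma_Z\|_\infty=\max_{k\notin J}\lambda_k/g_k\le\sqrt{cn}$, and in the regime $\lambda_r/(\lambda_r-\lambda_{r+1})\sim\sqrt{n}$ (allowed by the hypothesis) the diagonal entry $g_r^{-1}\langle u_r,\Delta u_r\rangle$ has fluctuations of constant order, so the event has probability bounded below by a constant; $e^{-cn}$ is impossible. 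Concretely, in a Koltchinskii--Lounici-type deviation bound the term $\|\Sigma_Z\|_\infty\sqrt{t/n}$ caps the deviation parameter at $t\sim n/\|\Sigma_Z\|_\infty^2$, so the best achievable exponent is $cn\min_{k\notin J}g_k^2/\lambda_k^2$, not $cn$. Your proof survives because the same monotonicity observation you use for $j\in J$ gives $\min_{k\notin J}g_k/\lambda_k\ge\sqrt{\delta}$, so this exponent is still $\ge cn\delta$ --- which is exactly what the paper obtains for this event by proceeding as in \cite[Lemma 9]{RW17} --- but the step as you wrote it is false and must be repaired in this way.

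The second issue is that your eigenvalue step treats as a black box precisely what constitutes the substance of the paper's proof. A bound of the form $\mathbb{P}(\hat\lambda_j>(1+x)\lambda_j)\le e^{-cnx^2}$ is not a generic consequence of sub-Gaussianity: upper relative deviations of empirical eigenvalues require effective-rank/tail-trace conditions, and the work is to verify that these hold under \eqref{ThmVarBoundCond}. The paper does this by applying \cite[Corollary 3.12]{RW17} at threshold $(\lambda_r-\lambda_j)/2$ and \cite[Corollary 3.14]{RW17} at threshold $(\lambda_j-\lambda_{s+1})/2$, checking that the attached conditions are strongest at $j=r+1$ and $j=s$ respectively, where they reduce to \eqref{ThmVarBoundCond}. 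Your citation of \cite[Theorem 2.15]{RW17} is also off target: as used in Proposition \ref{PropEVConc}, that result is a lower-deviation bound at a fixed relative threshold under $j\le c_1n$, not a two-sided relative bound at arbitrary scale $x$. Relatedly, your symmetric reduction to $\{|\hat\lambda_j-\lambda_j|\le g_j/2\}$ is valid but needlessly demanding: it requires concentration at the smaller threshold $g_j/2$ on \emph{both} sides, whereas the paper's asymmetric interval $-(\lambda_j-\lambda_{s+1})/2\le\hat\lambda_j-\lambda_j\le(\lambda_r-\lambda_j)/2$ is chosen exactly so that each one-sided event matches a corollary available under \eqref{ThmVarBoundCond}. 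So while your skeleton is right and your monotonicity computations are correct, the two concentration inputs you invoke are, respectively, false as stated and unverified where verification is the actual content of the lemma.
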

\begin{proof}
Proceeding as in \cite[Lemma 9]{RW17}, we have
\[
\mathbb{P}(\|S_{J^c}\Delta S_{J^c}\|_\infty>1/4)\le \exp\Big(-n\min\Big(\frac{(\lambda_r-\lambda_{r+1})^2}{16C_3^2\lambda_r^2},\frac{(\lambda_s-\lambda_{s+1})^2}{16C_3^2\lambda_{s+1}^2}\Big)\Big),
\]
provided that 
\begin{equation*}
\max\bigg(\frac{\lambda_r}{\lambda_r-\lambda_{r+1}},\frac{\lambda_{s+1}}{\lambda_s-\lambda_{s+1}}\bigg)\bigg(\sum_{j\le r}\frac{\lambda_j}{\lambda_{j}-\lambda_{r+1}}+\sum_{k>s}\frac{\lambda_k}{\lambda_{s}-\lambda_k}\bigg)\le \frac{n}{16C_3^2},
\end{equation*}
where $C_3$ is the constant from \cite[Equation (3.16)]{RW17} depending only on $C_1$. The latter is satisfied under \eqref{ThmVarBoundCond} if $c\le 1/(16C_3^2)$. We turn to the empirical eigenvalues. First, the condition that $|\hat\lambda_j-\lambda_k|\ge |\lambda_j-\lambda_k|/2$ for every $j\in J,k\notin J$ is implied by the condition  that $\hat\lambda_j-\lambda_k\ge (\lambda_j-\lambda_k)/2$ for every $j\in J,k>s$ and $\lambda_k-\hat\lambda_j\ge (\lambda_k-\lambda_j)/2$ for every $j\in J,k\le r$. It is simple to check that the latter condition is equivalent to $-(\lambda_j-\lambda_{s+1})/2\le \hat\lambda_j-\lambda_j\le (\lambda_r-\lambda_j)/2$ for every $j\in J$. By \cite[Corollary 3.14]{RW17}, we have for $j\in J$,
\begin{align*}
&\mathbb{P}(\hat\lambda_j-\lambda_j<-(\lambda_j-\lambda_{s+1})/2)\\
&\le \exp\Big(-\frac{n(\lambda_j-\lambda_{s+1})^2}{4C_3^2\lambda_j^2}\Big)\le \exp\Big(-\frac{n(\lambda_s-\lambda_{s+1})^2}{4C_3^2\lambda_s^2}\Big),
\end{align*}
provided that 
\[
\frac{\lambda_j}{\lambda_j-\lambda_{s+1}}\sum_{k\le j}\frac{\lambda_k}{\lambda_k-\lambda_{s+1}}\le \frac{n}{4C_3^2}.
\]
The latter condition is strongest for $j=s$ and the resulting condition is implied by \eqref{ThmVarBoundCond}. Similarly, by \cite[Corollary 3.12]{RW17} we have for $j\in J$, 
\[
\mathbb{P}(\hat\lambda_j-\lambda_j>(\lambda_r-\lambda_j)/2)\le \exp\Big(-\frac{n(\lambda_r-\lambda_j)^2}{4C_3^2\lambda_r^2}\Big)\le \exp\Big(-\frac{n(\lambda_r-\lambda_{r+1})^2}{4C_3^2\lambda_r^2}\Big),
\]
provided that 
\[
\frac{\lambda_r}{\lambda_r-\lambda_{j}}\sum_{k\ge j}\frac{\lambda_k}{\lambda_r-\lambda_{k}}\le \frac{n}{4C_3^2}.
\]
The latter condition is strongest for $j=r+1$ and the resulting condition is implied by \eqref{ThmVarBoundCond}. The claim now follows from the above concentration inequalities and the union bound.
\end{proof}

\begin{proof}[End of proof of Theorem \ref{ThmVarBound}]  
By Lemma \ref{lem1} and the bound $\mathbb{E}\|P_j\Delta P_k\|_2^2\le 16\SGN^2$ (cf. \cite[Section 2.1]{RW17}), we get
\begin{equation*}
\mathbb{E}\sum_{k\notin J}g_k\|\hat P_JP_{k}\|_2^2\le 256\SGN^2\sum_{j\in J}\sum_{k\notin J}\frac{\lambda_j\lambda_k}{g_k}+\lambda_1(s-r)\mathbb{P}(\mathcal{E}^c)
\end{equation*}
and the first claim follows from Lemma \ref{Lem2}. Inserting \eqref{ThmVarBoundCond} into the first claim gives \eqref{ThmExpVarBound}.
\end{proof}

\subsection{Proof of Lemma \ref{LemApproxPolDecay1}} \label{ProofLemApproxPolDecay1}
In the proof, we will make use of the fact (cf. Equation (2.15) in \cite{RW17}) that there is a constant $C > 0$ depending only on $\alpha$ such that, for every $r \ge 1$,
\begin{equation}\label{EqEVEPD}
\sum_{j\le r}\frac{j^{-\alpha}}{j^{-\alpha}-(r+1)^{-\alpha}}\le Cr\log(er),\quad\sum_{k>r}\frac{k^{-\alpha}}{r^{-\alpha}-k^{-\alpha}}\le Cr\log(er).
\end{equation}
\begin{proof}[Proof of (a)]
Let $0<c_1<1$ be a constant such that $\CEV^{-2}(2c_1)^{-\alpha}>1$. By continuity, there is a constant $0<c_2<\CEV^{-1}$ such that $(\CEV^{-1}-c_2)(\CEV-c_2)^{-1}(2c_1)^{-\alpha}>1$. Let $d\ge 1/c_1$ and let $[c_1d, d]\cap\mathbb{N}=\{f,f+1\dots,d\}$. We first show that there is an $r\in\{f,f+1\dots,d\}$ such that
\begin{equation}\label{EqExistGap}
\forall j\le r,\qquad \lambda_j-\lambda_{r+1}> c_2(j^{-\alpha}-(r+1)^{-\alpha}).
\end{equation}
Arguing by contradiction, let us assume that for every $r\in\{f,f+1\dots,d\}$, \eqref{EqExistGap} does not hold. Then there exists a sequence of natural numbers $d+1=j_1>j_2>\dots>j_{M-1}>f\ge j_M$ with $\lambda_{j_{a+1}}-\lambda_{j_a}\le c_2(j_{a+1}^{-\alpha}-j_a^{-\alpha})$ for $a=1,\dots, M-1$. We get
\begin{equation*}
\lambda_{j_M}-\lambda_{j_1}=\sum_{a=1}^{M-1}(\lambda_{j_{a+1}}-\lambda_{j_a})\le c_2\sum_{a=1}^{M-1}( j_{a+1}^{-\alpha}-j_a^{-\alpha})=c_2(j_{M}^{-\alpha}-j_1^{-\alpha})
\end{equation*}
and thus $\CEV^{-1} j_M^{-\alpha}-\CEV j_1^{-\alpha}\le c_2(j_{M}^{-\alpha}-j_1^{-\alpha})$, by invoking also Assumption \ref{ApproxPolDecay}. Rearranging and using the bound $j_M\le f \le c_1d+1\le 2c_1d$, we have
\[
(\CEV^{-1}-c_2)(2c_1)^{-\alpha}d^{-\alpha}\le (\CEV^{-1}-c_2)j_M^{-\alpha}\le (\CEV-c_2)j_1^{-\alpha}\le (\CEV-c_2)d^{-\alpha},
\]
which contradicts the definition of $c_2$. Hence, there is an $r\in\{f,f+1\dots,d\}$ such that \eqref{EqExistGap} holds and for this choice, \eqref{EqExistGap}, Assumption \ref{ApproxPolDecay}, and \eqref{EqEVEPD} give
\[
\sum_{j\le r}\frac{\lambda_j}{\lambda_j-\lambda_{r+1}}\le \frac{\CEV}{c_2}\sum_{j\le r}\frac{j^{-\alpha}}{j^{-\alpha}-(r+1)^{-\alpha}}\le C_1r\log(er)
\]
with a constant $C_1>0$ depending only on $\CEV$ and $\alpha$. The second claim follows similarly.
\end{proof}

\begin{proof}[Proof of (b)]
Similarly as in the proof of Lemma \ref{LemApproxPolDecay1}, let $C_1>1$ and $0<c_1<\CEV^{-1}/2$ be constants such that $\CEV^{-1}-2c_1>(\CEV-2c_1)C_1^{-\alpha}$. Let $d\ge 1$ and let $[d, C_1d]\cap\mathbb{N}=\{d,d+1\dots,f\}$. Let 
\[
I=\{r\in\{d,\dots,f\}:\forall j\le r, \lambda_j-\lambda_{r+1}> c_1(j^{-\alpha}-(r+1)^{-\alpha})\},
\] 
and let 
\[
J=\{s\in\{d,\dots,f\}:\forall k>s, \lambda_s-\lambda_k> c_1(s^{-\alpha}-k^{-\alpha})\}.
\] 
By definition of $I$, there exists $M\ge 0$ and a sequence of natural numbers $r_1\ge j_1> r_2\ge j_2> \dots> r_{M}\ge j_{M}$ with $r_1=\max\{d,\dots,f\}\setminus I\le f$ and $r_{M}\ge d$ such that 
\begin{equation*}
 \{d,\dots,f\}\setminus I \subseteq \bigcup_{a=1}^{M}\{j_{a},\dots,r_a\},\quad \lambda_{j_a}-\lambda_{r_a+1}\le c_1(j_a^{-\alpha}-(r_{a}+1)^{-\alpha})\ \forall a\le M.
\end{equation*}
Similarly, there exists $N\ge 0$ and a sequence of natural numbers $s_1<k_1\le s_2<k_2\le \dots\le s_{N}<k_{N}$ with $s_1=\min\{d,\dots,f\}\setminus J\ge d$ and $s_{N}\le f$ such that
\begin{equation*}
\{d,\dots,f\}\setminus J \subseteq \bigcup_{b=1}^{N}\{s_b,\dots,k_b-1\},\quad\lambda_{s_b}-\lambda_{k_{b}}\le c_1({s_b}^{-\alpha}-{k_b}^{-\alpha})\ \forall b\le N.
\end{equation*}
We now show that $I\cap J\neq \emptyset$. We argue by contradiction, and assume that $I\cap J=\emptyset$. Then we have the covering
\begin{align*}
\{d',\dots,f'\}&=\{d,\dots,f\}\cup \{j_M,\dots,r_M\}\cup\{s_N,\dots,k_{N}-1\}\\
&=\bigcup_{a=1}^{M}\{j_{a}\dots,r_a\}\cup\bigcup_{b=1}^{N}\{s_b,\dots,k_b-1\}
\end{align*}
with $d'=\min(d, j_{M})$ and $f'=\max(f,k_{N}-1)$. We get
\begin{align*}
\lambda_{d'}-\lambda_{f'+1}&\le \sum_{a=1}^{M}(\lambda_{j_a}-\lambda_{r_a+1})+\sum_{b=1}^N(\lambda_{s_b}-\lambda_{k_{b}})\\
&\le c_1\sum_{a=1}^{M}(j_a^{-\alpha}-(r_{a}+1)^{-\alpha})+c_1\sum_{b=1}^N(s_b^{-\alpha}-k_b^{-\alpha})\\
&\le 2c_1({d'}^{-\alpha}-(f'+1)^{-\alpha})
\end{align*}
and thus $\CEV^{-1}{d'}^{-\alpha}-\CEV(f'+1)^{-\alpha}\le 2c_1({d'}^{-\alpha}-(f'+1)^{-\alpha})$, by invoking also Assumption~\ref{ApproxPolDecay}. Rearranging and using the bound $f'+1\ge C_1d$, we have
\[
(\CEV^{-1}-2c_1)d^{-\alpha}\le (\CEV^{-1}-2c_1){d'}^{-\alpha}\le (\CEV-2c_1)(f'+1)^{-\alpha}\le (\CEV-2c_1)C_1^{-\alpha}d^{-\alpha},
\]
which contradicts the definition of $c_1$. Hence, $I\cap J\neq \emptyset$ and the claim follows similarly as in the proof of Lemma \ref{LemApproxPolDecay1} from applying the definitions of $I$ and $J$, Assumption \ref{ApproxPolDecay}, and \eqref{EqEVEPD} to $r\in I\cap J$.
\end{proof}

\end{document}